\title{Front representation of set partitions}
\author{Jang Soo Kim}
\address{School of Mathematics, University of Minnesota, Minneapolis,
  Minnesota 55455, USA}
\email{kimjs@math.umn.edu}
\urladdr{\url{http://www.math.umn.edu/~kimjs}}
\thanks{The author was supported by the grant ANR08-JCJC-0011.}
\subjclass[2000]{Primary: 05A18; Secondary: 05A15, 05A19}
\keywords{set partitions, noncrossing partitions, crossings, nestings, Catalan numbers}
\date{\today}
\newtheorem{thm}{Theorem}[section]
\newtheorem{lem}[thm]{Lemma}
\newtheorem{prop}[thm]{Proposition}
\newtheorem{cor}[thm]{Corollary}
\theoremstyle{definition}
\newtheorem{conj}{Conjecture}
\newtheorem{problem}{Problem}
\theoremstyle{remark}
\newtheorem{remark}{Remark}
\DeclareMathOperator{\fcr}{fcr}
\DeclareMathOperator{\fne}{fne}
\DeclareMathOperator{\FCR}{FCR}
\DeclareMathOperator{\FNE}{FNE}
\DeclareMathOperator{\WFCR}{WFCR}
\DeclareMathOperator{\WFNE}{WFNE}
\DeclareMathOperator{\dcr}{dcr}
\DeclareMathOperator{\dne}{dne}
\def\sch.{Schr{\"o}der}
\def\bk#1{\{#1\}}
\def\cvput#1[#2]{\pnode(#1,1){#1} \pscircle*(#1,1){.1} \rput(#1,.5){$#2$}}
\def\vput#1{\cvput#1[#1]}
\def\edge#1#2{\ncarc[arcangle=50]{#1}{#2}}
\def\CLOSER{\pnode(0,1){vertex} \pnode(-.25,1.3){half}
\ncarc[arcangle=50]{half}{vertex} \pscircle*(0,1){.1}}
\def\closer#1{\rput(#1,0){\CLOSER}}
\def\cell(#1,#2)[#3]{
\ax=#2 \ay=#1
\bx=\ax \by=\ay
\cx=\ax \cy=\ay
\dx=\ax \dy=\ay
\advance\bx by-1
\advance\dy by-1
\advance\cx by-1
\advance\cy by-1
\pspolygon(\ax,\ay)(\bx,\by)(\cx,\cy)(\dx,\dy)(\ax,\ay)
\rput(\number\cx.5,\number\cy.5){#3}
}
\begin{document}

\begin{abstract}
 Let $\pi$ be a set partition of $[n]=\{1,2,\ldots,n\}$. The standard representation of $\pi$ is the graph on the vertex set $[n]$ whose edges are the pairs $(i,j)$ of integers with $i<j$ in the same block which does not contain any integer between $i$ and $j$. The front representation of $\pi$ is the graph on the vertex set $[n]$ whose edges are the pairs $(i,j)$ of integers with $i<j$ in the same block whose smallest integer is $i$. Using the front representation, we find a recurrence relation for the number of $12\cdots k12$-avoiding partitions for $k\geq2$. Similarly, we find a recurrence relation for the number of $k$-distant noncrossing partitions for $k=2,3$.  We also prove that the front representation has several joint symmetric distributions for crossings and nestings as the standard representation does.
\end{abstract}

\maketitle

\section{Introduction}

A (set) {\em partition} of $[n]=\{1,2,\ldots,n\}$ is a collection of mutually disjoint nonempty subsets, called \emph{blocks}, of $[n]$ whose union is $[n]$.  We will write a partition as a sequence of blocks $(B_1,B_2,\ldots,B_\ell)$ such that $\min(B_1)<\min(B_2)<\cdots<\min(B_\ell)$.  Let $\Pi_n$ denote the set of all partitions of $[n]$.

Let $\pi=(B_1,B_2,\ldots,B_\ell)$ be a partition of $[n]$.  A \emph{standard edge} of $\pi$ is a pair $(i,j)$ of integers with $i<j$ such that $i$ and $j$ are in the same block, and there is no other integer $t$ with $i<t<j$ in that block. The \emph{standard representation} of $\pi$ is the graph whose vertex set is $[n]$ and edge set is the set of all standard edges of $\pi$. For example, see Fig.~\ref{fig:rep1}.  The \emph{canonical word} of $\pi$ is the word $a_1a_2\cdots a_n$, where $a_i=j$ if $i\in B_j$.  For instance, the canonical word of $(\bk{1,4,6},\bk{2,5},\bk{3},\bk{7,8,9})$ is $123121444$.

\begin{figure}
  \centering
  \begin{pspicture}(1,0.5)(9,2) \vput{1} \vput{2} \vput{3} \vput{4}
    \vput{5} \vput{6} \vput{7} \vput{8} \vput{9} \edge{1}{4}
    \edge{4}{6} \edge{2}{5} \edge{7}{8} \edge{8}{9}
  \end{pspicture}
  \caption{The standard representation of
    $(\bk{1,4,6},\bk{2,5},\bk{3},\bk{7,8,9})$}
  \label{fig:rep1}
\end{figure}
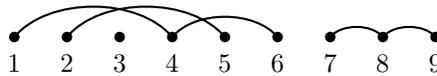

A \emph{crossing} of $\pi$ is a set of two standard edges $(i_1,j_1)$ and $(i_2,j_2)$ with $i_1<i_2<j_1<j_2$.  A \emph{noncrossing} partition is a partition without crossings. It is well known that the number of noncrossing partitions of $[n]$ is equal to the Catalan number $\frac{1}{n+1}\binom{2n}{n}$. See \cite[Exercise
  6.19]{Stanley1999} and \cite{catalan} for more examples counted by this number.

Let $\sigma=w_1w_2\cdots w_s$ be a word of integers and let $i_1,i_2,\ldots,i_t$ be the integers in $\sigma$ with $i_1<i_2<\cdots<i_t$.  The \emph{pattern} of $\sigma$ is the word obtained from $\sigma$ by replacing all the integers $i_j$ with $j$.  For instance, the pattern of $24552$ is $12331$.  For a word $\tau$, a partition $\pi\in\Pi_n$ is called \emph{$\tau$-avoiding} if the canonical word $a_1a_2\cdots a_n$ of $\pi$ does not contain a subword, i.e.~ $a_{j_1}a_{j_2}\cdots a_{j_r}$ with $j_1<j_2<\cdots<j_r$, whose pattern is $\tau$.  It is easy to see that a partition is $1212$-avoiding if and only if it is noncrossing.

For $k\geq2$, let $f_k(n)$ denote the number of $12\cdots k12$-avoiding partitions of $[n]$.  Mansour and Severini \cite{Mansour2007} found the following generating function for $f_k(n)$:
\begin{equation}\label{eq:mansour}
\sum_{n\geq0} f_k(n) x^n =
\frac{\frac{x^{k-1}y_k}{1-xy_k} + \sum_{j=0}^{k-2}\sum_{i=0}^j
  (-1)^{i+j} x^i \beta_{i,j}}
{1-\sum_{j=0}^{k-2}\sum_{i=0}^j(-1)^{i+j}ix\beta_{i,j}},  
\end{equation}
where $y_k=\frac{1-(k-2)x-\sqrt{(1-kx)^2-4x^2}} {2x(1-(k-2)x)}$, $\beta_{j,j}=1$ and $\beta_{i,j} = jx \prod_{s=i+1}^{j-1} (sx-1)$ for $i=0,1,\ldots,j-1$.

The main purpose of this paper is to find a recurrence relation for $f_k(n)$. To do this we use another representation of a partition, which is defined as follows.

Let $\pi\in\Pi_n$.  A \emph{head} of $\pi$ is the smallest element of a block of $\pi$.  A \emph{front edge} of $\pi$ is a pair $(i,j)$ of integers with $i<j$ such that $i$ and $j$ are in the same block, and $i$ is the head of that block.  The \emph{front representation}\footnote{We note that Chen et al.~\cite{Chen2008} first introduced the front representation (linear representation in their paper) to represent, so called, linked partitions.} of $\pi$ is the graph whose vertex set is $[n]$ and edge set is the set of all front edges of $\pi$.  For example, see Fig.~\ref{fig:rep2}.

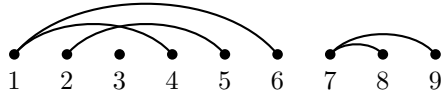
\begin{figure}
  \begin{pspicture}(1,0.5)(9,2) \vput{1} \vput{2} \vput{3} \vput{4}
    \vput{5} \vput{6} \vput{7} \vput{8} \vput{9} \edge{1}{4}
    \edge{1}{6} \edge{2}{5} \edge{7}{8} \edge{7}{9}
  \end{pspicture}
  \caption{The front representation of
    $(\bk{1,4,6},\bk{2,5},\bk{3},\bk{7,8,9})$} \label{fig:rep2}
\end{figure}

Using front edges instead of standard edges, we can define crossings as follows.

A \emph{front crossing} of $\pi$ is a set of two front edges $(i_1,j_1)$ and $(i_2,j_2)$ with $i_1<i_2<j_1<j_2$. We generalize front crossings as follows.

For $k\geq2$, a \emph{$k$-front crossing} of $\pi$ is a set of two front edges $(i_1,j_1)$ and $(i_2,j_2)$ with $i_1<i_2<j_1<j_2$ such that there are at least $k-2$ heads $h$'s with $i_2<h<j_1$.  For instance, two edges $(1,4)$ and $(2,5)$ in Fig.~\ref{fig:rep2} form a $3$-front crossing as well as a $2$-front crossing, but not a $4$-front crossing.

A \emph{$k$-front noncrossing} partition is a partition without $k$-front crossings.  It is easy to see that a partition is $k$-front noncrossing if and only if it is $12\cdots k12$-avoiding.  Thus $f_k(n)$ is the number of $k$-front noncrossing partitions of $[n]$.

Our main theorem is the following. 
\begin{thm}\label{thm:main}
Let $k\geq2$. Then $f_k(0)=f_k(1)=1$ and, for $n\geq2$, we have
\begin{align}\label{eq:main}
f_k(n) =& (k-1) f_k(n-1) + \sum_{i=1}^{k-3} (i+2-k) S(n-1,i)\\
&+\sum_{i=k-2}^{n-2} \left(f_k(i)-\sum_{j=1}^{k-3}S(i,j)\right)
\cdot \sum_{r=1}^{k-1} s(k-1,r) f_k(n-2-i+r), \notag
\end{align}
where $S(n,k)$ is the Stirling number of the second kind, i.e.~the number of partitions of $[n]$ with $k$ blocks, and $s(n,k)$ is the Stirling number of the first kind, i.e.~ $(-1)^{n-k}$ times the number of permutations of $[n]$ with $k$ cycles. 
\end{thm}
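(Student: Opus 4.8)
The plan is to condition on the structure of the block containing $n$ and on where the first "obstruction" to the $12\cdots k12$ pattern occurs, using the front representation so that each block of $\pi$ contributes exactly one front edge from its head to each of its other elements. First I would set up the base cases $f_k(0)=f_k(1)=1$ trivially, and for $n\geq 2$ split the count of $k$-front noncrossing partitions of $[n]$ according to whether $n$ is the head of its own (singleton) block, or $n$ lies in a block with smaller head. The singleton case and the case where removing $n$ leaves no new $k$-front crossing should together produce the leading term $(k-1)f_k(n-1)$ together with the correction $\sum_{i=1}^{k-3}(i+2-k)S(n-1,i)$: here the point is that $n$ can be inserted into any of the blocks of a $k$-front noncrossing partition of $[n-1]$ or form a new block, but one must subtract off the configurations that were already forbidden, and when a partition of $[n-1]$ has fewer than $k-2$ blocks there are simply not enough heads available to create a $k$-front crossing, so the count of "valid insertions" changes — that is exactly what the Stirling-number correction records (the coefficient $i+2-k$ being negative for $i<k-2$).

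The heart of the argument is the double sum. I would let $i$ be the largest index such that the restriction of $\pi$ to $\{1,\dots,i+1\}$, or more precisely to the first $i$ heads together with everything below a certain point, is still "free," and let the edge $(i_1,j_1)$ be the edge that together with the edge incident to $n$ (or to the relevant large vertex) first witnesses the potential $k$-front crossing. Concretely, I expect the decomposition to say: the first $i+1$ vertices carry an arbitrary $k$-front noncrossing sub-partition, from which we must remove those with fewer than $k-2$ blocks (the term $f_k(i)-\sum_{j=1}^{k-3}S(i,j)$), and then the remaining vertices $\{i+2,\dots,n\}$ attach to the heads among the first part in a way governed by how many of the first $i$'s heads lie in the "window" $(i_2,j_1)$. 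The sum $\sum_{r=1}^{k-1}s(k-1,r)f_k(n-2-i+r)$ is the signed-inclusion-exclusion device that enforces "at least $k-2$ heads in the window" while keeping the tail of the partition $k$-front noncrossing: I would prove the identity $\sum_{r=1}^{k-1}s(k-1,r)f_k(m+r)$ equals the number of $k$-front noncrossing partitions of an $(m+k-1)$-element set subject to the constraint that a prescribed set of $k-1$ new vertices, interleaved with $k-2$ designated heads, does not create a forbidden pattern — essentially by expanding the falling factorial $(t)_{k-1}=\sum_r s(k-1,r)t^r$ and interpreting $t$ as a choice of attachment among $t$ heads, then summing over the tail.

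I would carry this out in the order: (1) base cases and the split on the block of $n$; (2) the "easy insertions" computation giving the first line of \eqref{eq:main}, including a careful count of how many blocks a $k$-front noncrossing partition must have before an insertion can be blocked, which yields the $S(n-1,i)$ correction; (3) the structural decomposition of a $k$-front noncrossing partition of $[n]$ that is \emph{not} obtained by an easy insertion, via the index $i$ and the window of heads, giving the factor $f_k(i)-\sum_j S(i,j)$; (4) the inclusion-exclusion lemma turning the "at least $k-2$ heads in the window, tail avoids $12\cdots k12$" count into $\sum_r s(k-1,r)f_k(n-2-i+r)$; (5) assembling the pieces and checking small values of $n$ and $k$ (e.g.\ $k=2$, where the formula should collapse to the Catalan recurrence, and $k=3$) as a sanity check. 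The main obstacle I anticipate is step (4): correctly identifying which vertices and heads the factor $f_k(n-2-i+r)$ is counting, and verifying that the signs really come from Stirling numbers of the first kind rather than some other inclusion-exclusion coefficients — in particular making sure the "tail" partition that contributes $f_k$ is on exactly $n-2-i+r$ elements and that the $k-1$ removed/merged heads do not themselves spawn a forbidden pattern with the tail. A secondary subtlety is handling the boundary cases $i=k-2$ and $n-2$ in the sum, and checking that the upper limit $k-3$ in the correction sums behaves correctly when $k=2$ (empty sum) and $k=3$ (single term).
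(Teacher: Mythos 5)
Your overall plan coincides with the paper's: split on the block containing $n$; observe that attaching $n$ to one of the $\min(i,k-2)$ blocks with the largest heads is always safe, which gives the first line of \eqref{eq:main} with the Stirling numbers of the second kind correcting for partitions of $[n-1]$ having fewer than $k-2$ blocks; and handle the remaining case by cutting $\pi$ into two pieces counted by $f_k(i)-\sum_j S(i,j)$ and by a Stirling transform of $f_k$. But two load-bearing steps are missing or misidentified. First, the cut point: you take $i$ to be "the largest index such that the restriction to $\{1,\ldots,i+1\}$ is still free," which is not the right (or a well-defined) invariant. The correct decomposition, when $(t,n)$ is a front edge and there are at least $k-2$ heads strictly between $t$ and $n$, is to let $\ell_{k-2}$ be the $(k-2)$th smallest such head and set $i=\ell_{k-2}-1$; then $\pi_1=\pi\cap[i]$ is an arbitrary $k$-front noncrossing partition of $[i]$ with at least $k-2$ blocks (whence the factor $f_k(i)-\sum_{j=1}^{k-3}S(i,j)$), while $\pi_2=\pi\cap\{\ell_0,\ell_1,\ldots,\ell_{k-2},\ell_{k-2}+1,\ldots,n-1\}$ is a $k$-front noncrossing partition of $[n+k-3-i]$ whose first $k-1$ integers are heads, and the two pieces are glued along the largest $k-2$ heads of $\pi_1$. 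Without pinning this down you cannot verify that the correspondence is a bijection, nor that $\pi$ is $k$-front noncrossing exactly when both pieces are.

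Second, and more seriously, your step (4) is exactly where the real work lies, and "expanding the falling factorial" is the right algebra but not yet a proof. What is needed is the combinatorial recurrence $h_k(j,N)=h_k(j-1,N)-(j-1)\,h_k(j-1,N-1)$ for $2\leq j\leq k$, where $h_k(j,N)$ is the number of $k$-front noncrossing partitions of $[N]$ whose first $j$ integers are heads; it holds because attaching the integer $j$ to any of the $j-1$ earlier heads can never create a $k$-front crossing when $j\leq k$ (there is no room for $k-2$ intermediate heads below position $j$), so deleting $j$ is a $(j-1)$-to-one map onto $H_k(j-1,N-1)$. Iterating this recurrence and identifying the resulting coefficients as signed Stirling numbers of the first kind — the paper does this with a weighted lattice-path argument, equivalent to your falling-factorial identity applied to the downshift operator on the sequence $f_k$ — yields $h_k(k-1,N)=\sum_{r=1}^{k-1}s(k-1,r)f_k(N-k+1+r)$, which with $N=n+k-3-i$ is the inner sum of \eqref{eq:main}. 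Until you isolate and prove that recurrence, in particular the claim that no new $k$-front crossing arises for $j\leq k$, the appearance of $s(k-1,r)$ in your formula is unjustified. The rest of your outline (base cases, the $k=2,3$ sanity checks) is fine.
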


If $k=2$ then \eqref{eq:main} is equivalent to the following well known recurrence relation for the Catalan number: $f_2(0)=1$, and for $n\geq1$,
\[f_2(n) = \sum_{i=0}^{n-1} f_2(i) f_2(n-1-i).\] If $k=3$ then
\eqref{eq:main} is equivalent to the following: $f_3(0)=f_3(1)=1$, and
for $n\geq2$,
\begin{equation}\label{eq:f3}
f_3(n) = 2f_3(n-1) + \sum_{i=2}^{n-1} f_3(n-i) 
\left(f_3(i)-f_3(i-1)\right).  
\end{equation}
Using a similar argument, we can prove the following recurrence
relation for $f_2(n)$, which is very close to \eqref{eq:f3}.
\begin{thm}
  \label{thm:catalanrec}
We have $f_2(0)=f_2(1)=1$, and for $n\geq2$,
\begin{equation}\label{eq:f2}
f_2(n) = 2f_2(n-1) + \sum_{i=2}^{n-1} f_2(n-1-i) 
\left(f_2(i)-f_2(i-1)\right).
\end{equation}
\end{thm}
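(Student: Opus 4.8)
The plan is to prove \eqref{eq:f2} by a direct combinatorial decomposition of noncrossing (equivalently $2$-front noncrossing, equivalently $1212$-avoiding) partitions of $[n]$, in the same spirit as the argument behind \eqref{eq:f3}. The cases $n=0,1$ are immediate, so fix $n\ge2$ and let $B_1$ denote the block containing $1$. I would classify the noncrossing partitions of $[n]$ according to the block $B$ containing $2$ into three types: (i) $1\in B$; (ii) $B=\{2\}$; (iii) $B\ne B_1$ and $|B|\ge2$, so that $2=\min B$ is a head. In type (i), deleting the vertex $1$ gives a bijection with noncrossing partitions of $[n-1]$, and in type (ii) deleting the vertex $2$ does; in each case one checks with the front representation that no front crossing is created or destroyed, because the only affected front edges are incident to $1$ or $2$, and an edge $(1,2)$, or a singleton vertex, can never belong to a front crossing (a front crossing $(i_1,j_1),(i_2,j_2)$ has $i_1<i_2<j_1<j_2$, which rules out $i_1=1,j_1=2$, and so on). Thus types (i) and (ii) contribute $2f_2(n-1)$.

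For type (iii), write $B_2$ for the block of $2$ and split according to $|B_1|$. If $|B_1|=1$, deleting the singleton $\{1\}$ and shifting gives a bijection onto the noncrossing partitions of $[n-1]$ whose vertex $1$ heads a block of size $\ge2$; there are $f_2(n-1)-f_2(n-2)$ of these (all noncrossing partitions of $[n-1]$, minus the $f_2(n-2)$ in which $\{1\}$ is a singleton). If $|B_1|\ge2$, let $b=\min(B_1\setminus\{1\})\ge3$. The crucial observation — and the one place the front representation is genuinely needed — is that every block other than $B_1$ meeting $\{2,3,\dots,b-1\}$ is contained in $\{2,\dots,b-1\}$: such a block has head $h$ with $2\le h\le b-1$, and if it also had an element $d>b$ then $(1,b)$ and $(h,d)$ would be a front crossing. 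Hence $\pi$ is the concatenation of an inner partition $\pi_{\mathrm{in}}$ on $\{2,\dots,b-1\}$ and an outer partition $\pi_{\mathrm{out}}$ on $\{1\}\cup\{b,b+1,\dots,n\}$ in which $1$ and $b$ lie in one block; conversely any such pair concatenates to a noncrossing partition, which I would verify by showing that any front crossing of the concatenation lies entirely inside $\pi_{\mathrm{in}}$ or entirely inside $\pi_{\mathrm{out}}$. Since $2\in B_2\subseteq\{2,\dots,b-1\}$ and $|B_2|\ge2$, the inner ground set has size $m\ge2$, and after the shift $j\mapsto j-1$ the inner partition is a noncrossing partition of $[m]$ whose vertex $1$ heads a block of size $\ge2$, giving $f_2(m)-f_2(m-1)$ choices; the outer partition is a noncrossing partition on $n-m$ vertices whose first two vertices share a block, and deleting the first vertex (again with the front-representation check) shows there are $f_2(n-m-1)$ of those. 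Summing over $m=2,\dots,n-2$ gives $\sum_{m=2}^{n-2}(f_2(m)-f_2(m-1))f_2(n-1-m)$.

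It remains to assemble the pieces: the sub-case $|B_1|=1$ contributes exactly the missing $m=n-1$ term $(f_2(n-1)-f_2(n-2))f_2(0)$, so type (iii) contributes $\sum_{m=2}^{n-1}(f_2(m)-f_2(m-1))f_2(n-1-m)$, and adding the $2f_2(n-1)$ from types (i) and (ii) yields \eqref{eq:f2} after renaming $m$ as $i$. The main obstacle I anticipate is the bookkeeping in the $|B_1|\ge2$ sub-case: proving that ``noncrossing'' factors through the inner/outer split in both directions (the forward direction is just restriction, but the reverse needs the crossing-localization argument above), and pinning down the index ranges so that the inner factor is precisely $f_2(m)-f_2(m-1)$ with $m\ge2$ and the outer factor is precisely $f_2(n-m-1)$. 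The auxiliary facts used — that the noncrossing partitions of $[m]$ with $1$ and $2$ in one block, respectively with $\{1\}$ a singleton, are equinumerous with the noncrossing partitions of $[m-1]$ — are routine once one observes in the front representation that attaching a new minimum vertex $1$ to the block of $2$, or as its own singleton block, never creates a front crossing.
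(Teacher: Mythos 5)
Your proof is correct, but it runs in the opposite direction from the paper's. The paper conditions on the largest element: either $n$ is a singleton, or $(j,n)$ is a front edge and the block of $j$ is exactly $\{j,n\}$ (giving the Catalan convolution $\sum_j f_2(j-1)f_2(n-1-j)$), or else it splits the partition at the second-largest element $i$ of the block containing $n$ into $\pi\cap[i]$ (with $i$ not a singleton, counted by $f_2(i)-f_2(i-1)$) and $\pi\cap\{i+1,\dots,n-1\}$; it then needs the classical recurrence $f_2(n-1)=\sum_{j=1}^{n-1}f_2(j-1)f_2(n-1-j)$ to merge two of its three contributions into $2f_2(n-1)$. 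You instead condition on the smallest elements --- the block of $2$ relative to the block of $1$ --- and split at $b=\min(B_1\setminus\{1\})$ into an inner piece on $\{2,\dots,b-1\}$ and an outer piece on $\{1\}\cup\{b,\dots,n\}$; your crossing-localization argument (a block headed in $\{2,\dots,b-1\}$ cannot reach past $b$ without crossing $(1,b)$) is the correct substitute for the paper's observation that $\pi$ is reconstructible from $\pi_1$ and $\pi_2$, and your index bookkeeping, including absorbing the $|B_1|=1$ sub-case as the $m=n-1$ term, checks out. What each approach buys: yours produces the $2f_2(n-1)$ term from two clean bijections (attach $1$ to the block of $2$, or insert $\{2\}$ as a singleton) and so is self-contained, needing no appeal to the standard Catalan recurrence; the paper's right-to-left decomposition is the $k=2$ shadow of the argument proving Theorem~\ref{thm:main}, which is presumably why the author states it that way.
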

The argument proving \eqref{eq:f2} gives us another object counted by the Catalan number.

The rest of this paper is organized as follows. In Section~\ref{sec:hfnp} we
prove Theorems~\ref{thm:main} and \ref{thm:catalanrec}, and find a combinatorial
object counted by the Catalan number. In Section~\ref{sec:k-dist} we recall the
definition of $k$-distant noncrossing partition introduced by Drake and Kim
\cite{Drake2008}. We then find a recurrence relation for the number of
$k$-distant noncrossing partitions for $k=2,3$. We also prove that the number of
$3$-distant noncrossing partitions is equal to the sum of certain weights of
Motzkin paths, which was conjectured in \cite{Drake2008}.  In
Section~\ref{sec:sym} we review several joint symmetric distributions for
crossings and nestings in the standard representation and prove similar
properties in the front representation.

\section{A recurrence relation for $f_k(n)$}
\label{sec:hfnp}

Let us fix $k\geq2$ throughout this section. 

Let $\pi$ be a partition of $[n]$.  We say that an integer $i$ is a \emph{singleton} of $\pi$ if $\{i\}$ is a block of $\pi$. Let $A=\{a_1,a_2,\ldots,a_m\}$ be a subset of $[n]$ with $a_1<a_2<\cdots<a_m$.  We define $\pi\cap A$ to be the partition of $[m]$ obtained from $\pi$ by removing all the integers not in $A$ and replacing $a_i$ with $i$ for $i\in[m]$. We also define $\pi\setminus j$ to be $\pi\cap ([n]\setminus\{j\})$. For example, if $\pi=(\{1,2,6\},\{3,5\},\{4\})$, then $\pi\cap \{1,4,6\}=(\{1,3\},\{2\})$ and $\pi\setminus 2=(\{1,5\},\{2,4\},\{3\})$.

\subsection{A proof of the main theorem}

Let $\pi$ be a $k$-front noncrossing partition of $[n]$. Then $\pi$ falls into one of the following cases.

\begin{description}
\item[Case 1]\label{case1} $n$ is a singleton of $\pi$. Then there are $f_k(n-1)$ such $\pi$'s.
\item[Case 2] $(t,n)$ is a front edge of $\pi$ for $1\leq t<n$. Let $\pi$ have $m$ heads $h$'s with $t<h<n$. Then we have the following two sub-cases.
\begin{description}
\item[Sub-case 2-a] $0\leq m\leq k-3$. Let $\pi'=\pi\setminus n$. Then $\pi'$ is a $k$-front noncrossing partition of $[n-1]$.  Conversely, for
given $k$-front noncrossing partition $\pi'$ of $[n-1]$, we can make a partition $\pi$ in this sub-case satisfying $\pi'=\pi\setminus n$ from $\pi'$ by adding $n$ to the block containing the $j$th largest head for any $1\leq j\leq k-2$. If $\pi'$ has $i$ blocks, then there are $\min(i,k-2)$ ways to construct such a partition $\pi$. Note that if a partition has at most $k-3$ blocks, then it is a $k$-front noncrossing partition. Thus the number of $\pi$'s in this sub-case is equal to
\begin{align*}
&\sum_{i=1}^{k-3} i S(n-1,i) + (k-2)
\left( f_k(n-1) - \sum_{i=1}^{k-3} S(n-1,i)\right)\\
&=  (k-2) f_k(n-1) + \sum_{i=1}^{k-3} (i+2-k) S(n-1,i).
\end{align*}
\item[Sub-case 2-b] $m\geq k-2$. Let $\ell_0=t$, and for $1\leq r\leq k-2$, let $\ell_r$ be the $r$th smallest head among the heads greater than $t$.  Let $\pi_1=\pi\cap[\ell_{k-2}-1]$ and $\pi_2=\pi\cap\{\ell_0,\ell_1,\ldots,\ell_{k-2}, \ell_{k-2}+1,\ell_{k-2}+2,\ldots,n-1\}$.  Let $i=\ell_{k-2}-1$. Then $k-2\leq i\leq n-2$, $\pi_1\in \Pi_i$ and $\pi_2\in \Pi_{n+k-3-i}$. Note that both $\pi_1$ and $\pi_2$ are $k$-front noncrossing partitions, $\pi_1$ has at least $k-2$ blocks, and the first $k-1$ integers of $\pi_2$ are all heads.  Moreover, if $\pi_1$ and $\pi_2$ satisfy these properties, then $\pi\setminus n$ can be reconstructed from $\pi_1$ and $\pi_2$ by identifying the largest $k-2$ heads of $\pi_1$ with the smallest $k-2$ heads of $\pi_2$ and by increasing the integers greater than $k-2$ in $\pi_2$ by $i$. Thus the number of $\pi$'s in this sub-case is equal to 
\[\sum_{i=k-2}^{n-2} \left(f_k(i)-\sum_{j=1}^{k-3}S(i,j)\right)
\cdot h_k(k-1,n+k-3-i),\] where $h_k(j,n)$ is the number of
$k$-front noncrossing partitions of $[n]$ such that the first $j$ integers are
heads.
\end{description}
 \end{description}
 Summing up all the numbers in Case 1 and Case 2, we get the following
recurrence relation:
\begin{align}\label{eq:fkn1}
f_k(n) =& (k-1) f_k(n-1) + \sum_{i=1}^{k-3} (i+2-k) S(n-1,i)\\
&+\sum_{i=k-2}^{n-2} \left(f_k(i)-\sum_{j=1}^{k-3}S(i,j)\right) 
\cdot h_k(k-1,n+k-3-i).  \notag
\end{align}
Now it remains to express $h_k(m,n)$ in terms of $f_k(i)$'s.  Note that $h_k(1,n)=f_k(n)$. 

\begin{lem}
  \label{thm:5}
If $2\leq i\leq k$, then
\[h_k(i,n)= h_k(i-1,n) -(i-1)h_k(i-1,n-1).\]
\end{lem}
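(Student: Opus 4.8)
The plan is to read the difference $h_k(i-1,n)-h_k(i,n)$ as a count and match it with $(i-1)h_k(i-1,n-1)$ by a bijection. By definition this difference counts the $k$-front noncrossing partitions $\pi$ of $[n]$ in which $1,2,\ldots,i-1$ are all heads while $i$ is not. Since $1,\ldots,i-1$ are precisely the integers below $i$ and each heads its own block, $i$ must belong to the block headed by some $\ell\in\{1,\ldots,i-1\}$. I would associate to $\pi$ the pair $(\ell,\pi\setminus i)$: the factor $\ell$ records the choice of one of $i-1$ blocks, and $\pi\setminus i$ is a partition of $[n-1]$. The claim is that this is a bijection onto the set of pairs $(\ell,\pi')$ with $\ell\in\{1,\ldots,i-1\}$ and $\pi'$ a $k$-front noncrossing partition of $[n-1]$ whose first $i-1$ integers are heads, a set of cardinality $(i-1)\,h_k(i-1,n-1)$.

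First I would record the routine structural facts. Deleting the non-head element $i$ from $\pi$ removes exactly the front edge $(\ell,i)$ and, after the obvious order-preserving relabelling, keeps every other front edge and puts the heads of $\pi$ and of $\pi\setminus i$ in a betweenness-preserving bijection; conversely, inserting a fresh element $i$ into the block of $\ell$ of a given $\pi'$ (shifting integers $\geq i$ up by one) adds only the front edge $(\ell,i)$ and otherwise changes nothing but labels. In particular $1,\ldots,i-1$ stay heads, $i$ is never a head, and the two operations are inverse to each other, so the whole content of the lemma is that each operation stays within the class of $k$-front noncrossing partitions. The deletion direction is easy: the front edges of $\pi\setminus i$ relabel to a subset of the front edges of $\pi$ and the number of heads between two of them does not drop, so a $k$-front crossing in $\pi\setminus i$ would already be one in $\pi$.

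The main obstacle is the insertion direction, and this is where the hypothesis $i\leq k$ is used. Any $k$-front crossing created by inserting $i$ must involve the new edge $(\ell,i)$, since a crossing avoiding it consists of (relabelled) front edges of $\pi'$ with the same count of heads between them. Suppose $(\ell,i)$ is the left edge of a $k$-front crossing with partner $(i_2,j_2)$, so that $\ell<i_2<i$; then $i_2\in\{2,\ldots,i-1\}$, every integer strictly between $i_2$ and $i$ lies in $\{1,\ldots,i-1\}$ and is therefore a head, and the requirement of at least $k-2$ such heads forces $i_2\leq i-k+1$, which together with $i_2\geq2$ yields $i\geq k+1$, contradicting $i\leq k$. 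Suppose instead $(\ell,i)$ is the right edge of a $k$-front crossing with partner $(i_1,j_1)$, so that $\ell<j_1<i$; then $j_1\in\{1,\ldots,i-1\}$ is a head, yet $j_1$ is a non-minimal element of the block of $i_1$, a contradiction. Hence no such crossing is created, the insertion lands in the right class, and the two maps are mutually inverse bijections.

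Combining the two directions gives $h_k(i-1,n)-h_k(i,n)=(i-1)h_k(i-1,n-1)$, which is the stated identity; the smallest case $i=2$ just recovers $h_k(2,n)=f_k(n)-f_k(n-1)$ from $h_k(1,n)=f_k(n)$.
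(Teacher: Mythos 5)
Your proof is correct and takes essentially the same route as the paper's: both read $h_k(i-1,n)-h_k(i,n)$ as counting the partitions in which $1,\ldots,i-1$ are heads but $i$ is not, and set up the bijection $\pi\mapsto(\ell,\pi\setminus i)$ onto the set of pairs $(\ell,\pi')$ with $\ell\in[i-1]$ and $\pi'$ a $k$-front noncrossing partition of $[n-1]$ whose first $i-1$ integers are heads. The only difference is that you verify explicitly, using the hypothesis $i\leq k$, that reinserting $i$ cannot create a $k$-front crossing --- a point the paper's proof leaves implicit.
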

\begin{proof}
It can be done by a similar argument in the proof of Lemma~2.1 in \cite{Mansour2007}. More precisely, let $H_k(i,n)$ denote the set of $k$-front noncrossing partitions of $[n]$ such that the first $i$ integers are heads. By definition, we have $H_k(i,n)\subset H_k(i-1,n)$. Let $\pi\in H_k(i-1,n)\setminus H_k(i,n)$. Then $(j,i)$ is a front edge of $\pi$ for some $j\in[i-1]$. Let $\pi'=\pi\setminus i$. Then $\pi'\in H_k(i-1,n-1)$. Conversely, for each $\pi'\in H_k(i-1,n-1)$, there are $i-1$ ways to construct $\pi\in H_k(i-1,n)\setminus H_k(i,n)$. Thus we get $h_k(i,n)= h_k(i-1,n) -(i-1)h_k(i-1,n-1)$.
\end{proof}

\begin{lem}\label{thm:firstkind}
  Let $a(i,j)$ satisfy $a(i,j)= a(i-1,j) - (i-1) a(i-1,j-1)$ for
  $2\leq i\leq m$ and $1\leq j\leq n$. Then
\[a(m,n) = \sum_{r=1}^m s(m,r) a(1,n-m+r).\]
\end{lem}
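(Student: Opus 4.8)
The plan is to prove Lemma~\ref{thm:firstkind} by induction on $m$, using the recurrence for the Stirling numbers of the first kind. First I would set up the base case: when $m=1$, the claimed identity reads $a(1,n)=s(1,1)a(1,n)=a(1,n)$, which is trivially true since $s(1,1)=1$. For the inductive step, I assume the formula holds for $m-1$, i.e. $a(m-1,\ell)=\sum_{r=1}^{m-1}s(m-1,r)a(1,\ell-(m-1)+r)$ for all relevant $\ell$, and I want to deduce it for $m$.

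Applying the hypothesis $a(m,n)=a(m-1,n)-(m-1)a(m-1,n-1)$ and then substituting the induction hypothesis into both terms, I would get
\begin{align*}
a(m,n) &= \sum_{r=1}^{m-1} s(m-1,r)\, a(1,n-m+1+r) - (m-1)\sum_{r=1}^{m-1} s(m-1,r)\, a(1,n-m+r).
\end{align*}
The next step is to reindex the two sums so that the argument of $a(1,\cdot)$ matches the target $a(1,n-m+r)$ for $r=1,\dots,m$. In the first sum I shift $r\mapsto r-1$ to obtain $\sum_{r=2}^{m} s(m-1,r-1)\,a(1,n-m+r)$; the second sum is already in the right form. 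Collecting the coefficient of $a(1,n-m+r)$ for each $r$ gives $s(m-1,r-1)-(m-1)s(m-1,r)$, where terms outside the summation ranges are read as zero.

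The key step is then to recognize this as exactly the recurrence for signed Stirling numbers of the first kind, $s(m,r)=s(m-1,r-1)-(m-1)s(m-1,r)$, which follows from the unsigned recurrence $\left[{m\atop r}\right]=\left[{m-1\atop r-1}\right]+(m-1)\left[{m-1\atop r}\right]$ together with $s(m,r)=(-1)^{m-r}\left[{m\atop r}\right]$. With this, the coefficient of $a(1,n-m+r)$ is $s(m,r)$ for every $r=1,\dots,m$, and the edge cases ($r=1$, where the first sum contributes nothing and $s(m,1)=-(m-1)s(m-1,1)$; and $r=m$, where the second sum contributes nothing and $s(m,m)=s(m-1,m-1)=1$) are handled automatically by the convention that out-of-range Stirling numbers vanish. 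This completes the induction. I do not anticipate a genuine obstacle here; the only point requiring care is bookkeeping the summation limits when reindexing so that the boundary coefficients come out right, and making sure the range $2\le i\le m$ in the stated recurrence for $a$ is enough to run the induction (it is, since each step only uses the recurrence at level $m$).
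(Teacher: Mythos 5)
Your proof is correct. It takes a more direct route than the paper's: the paper unrolls the recurrence for $a(m,n)$ down a Pascal-like triangle, interprets the resulting coefficients $b(m,r)$ as sums of weights of lattice paths from $(0,0)$ to $(r-1,m-r)$, and then proves in a separate lemma that these weighted path sums equal $s(m,r)$ by checking that both satisfy $b(i,j)=b(i-1,j-1)-(i-1)b(i-1,j)$ with the right boundary values. You bypass the lattice-path model entirely and run a clean induction on $m$, reindexing the two sums and invoking the signed Stirling recurrence $s(m,r)=s(m-1,r-1)-(m-1)s(m-1,r)$ directly; your treatment of the boundary cases $r=1$ and $r=m$ via the convention that out-of-range Stirling numbers vanish is correct. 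The underlying mechanism is the same recurrence in both arguments, but your version is shorter and needs no auxiliary lemma, while the paper's version buys an explicit combinatorial (weighted lattice path) description of the coefficients that appear when the recurrence is iterated, which is of some independent interest. One bookkeeping remark: your closing claim that ``each step only uses the recurrence at level $m$'' is slightly off --- the induction hypothesis at level $m-1$ must itself be established using the recurrence at levels $2\le i\le m-1$ and at the shifted argument $j=n-1$ as well as $j=n$, but both are covered by the hypotheses of the lemma (read, as the paper implicitly does, as holding wherever the quantities involved are defined), so nothing breaks.
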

To prove Lemma~\ref{thm:firstkind}, we define the weight of a lattice path from $(0,0)$ to $(n,m)$ consisting of a north step $(0,1)$ and an east step $(1,0)$ as follows. The \emph{weight} of the $i$th step is $-i$ if it is a north step; and $1$ if it is an east step. The \emph{weight} of a lattice path is the product of all of its steps.

\begin{lem}\label{thm:1}
Let $1\leq r\leq m$. Then $s(m,r)$ is the sum of the weights of all lattice
paths from $(0,0)$ to $(r-1,m-r)$.
\end{lem}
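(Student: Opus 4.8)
The plan is to prove Lemma~\ref{thm:1} by induction on $m$, using the classical recurrence for the Stirling numbers of the first kind, namely $s(m,r) = s(m-1,r-1) - (m-1)s(m-1,r)$, with the base case $s(1,1)=1$ corresponding to the unique empty path from $(0,0)$ to $(0,0)$, which has weight $1$. First I would set up the notation: let $P(m,r)$ denote the sum of the weights of all lattice paths from $(0,0)$ to $(r-1,m-r)$, where each north step in position $i$ (counting steps from $1$) contributes $-i$ and each east step contributes $1$. I want to show $P(m,r)=s(m,r)$ for all $1\le r\le m$.

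The inductive step proceeds by classifying paths to $(r-1,m-r)$ according to their last step. A path to $(r-1,m-r)$ has $m-1$ steps total (namely $r-1$ east steps and $m-r$ north steps), so the last step is the $(m-1)$st step. If the last step is east, then deleting it gives a path to $(r-2,m-r)$ using $m-2$ steps, and the deleted step contributes weight $1$; summing over all such paths gives a contribution of $P(m-1,r-1)$. If the last step is north, deleting it gives a path to $(r-1,m-r-1)$ using $m-2$ steps, and the deleted step — being the $(m-1)$st step — contributes weight $-(m-1)$; summing gives a contribution of $-(m-1)P(m-1,r)$. Hence $P(m,r) = P(m-1,r-1) - (m-1)P(m-1,r)$, which is exactly the Stirling recurrence, and by the induction hypothesis $P(m-1,r-1)=s(m-1,r-1)$ and $P(m-1,r)=s(m-1,r)$, so $P(m,r)=s(m,r)$. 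One must also check the boundary cases $r=m$ (only north-free paths, i.e.\ the all-east path, but here $r-1 = m-1$ and $m-r = 0$, so actually the unique path is all east with weight $1$, matching $s(m,m)=1$) and $r=1$ (the all-north path of $m-1$ steps, with weight $(-1)(-2)\cdots(-(m-1)) = (-1)^{m-1}(m-1)!$, matching $s(m,1)$), and interpret $P(m-1,r)$ or $P(m-1,r-1)$ as $0$ when the target lattice point has a negative coordinate.

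The main subtlety — though not really an obstacle — is bookkeeping the step index correctly: the weight $-i$ attached to "the $i$th step if it is a north step" depends on the absolute position of the step within the path, not on how many north steps precede it, so when I delete the last step I must be sure the remaining steps keep their original indices $1,\ldots,m-2$, which they do since I only remove the final step. I would also remark that this lemma makes Lemma~\ref{thm:firstkind} transparent: iterating the recurrence $a(i,j)=a(i-1,j)-(i-1)a(i-1,j-1)$ down from $i=m$ to $i=1$ expresses $a(m,n)$ as a signed sum over lattice paths of the same type, where a path ending at $(r-1,m-r)$ picks up the factor $a(1,n-m+r)$, and collecting these by the value of $r$ and invoking Lemma~\ref{thm:1} yields $a(m,n)=\sum_{r=1}^m s(m,r)a(1,n-m+r)$.
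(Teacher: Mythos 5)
Your proof is correct and follows essentially the same route as the paper: both arguments show that the sum of path weights satisfies the boundary values $1$ at $r=m$ and $(-1)^{m-1}(m-1)!$ at $r=1$ together with the recurrence $P(m,r)=P(m-1,r-1)-(m-1)P(m-1,r)$, which is exactly the defining recurrence of $s(m,r)$. The only difference is that you explicitly justify the recurrence by deleting the last ($(m-1)$st) step, a detail the paper leaves implicit.
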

\begin{proof}
Let $b(i,j)$ be the sum of the weights of all lattice paths from $(0,0)$ to
$(j-1,i-j)$. Then $b(i,i)=1$, $b(i,1)=(-1)^{i-1}(i-1)!$ and $b(i,j)=b(i-1,j-1)-(i-1)b(i-1,j)$. Since $s(i,j)$ also satisfies these  conditions which determine $s(i,j)$ completely, we get
$b(i,j)=s(i,j)$.
\end{proof}

\begin{proof}[Proof of Lemma~\ref{thm:firstkind}]
  Using the given recurrence relation, we can express $a(m,n)$ as a linear combination of $a(m-1,n)$ and $a(m-1,n-1)$, which in turn can be expressed as a linear combination of $a(m-2,n)$, $a(m-2,n-1)$ and $a(m-2,n-2)$, and so on. Thus we can express $a(m,n)$ as 
\[a(m,n)=\sum_{r=1}^{m} b(m,r)a(1,n-(m-r)),\]
where $b(m,r)$ is the sum of the weights of all the shortest paths in Fig.~\ref{fig:lattice} from the point at which $a(m,n)$ is located to the point at which $a(1,n-(m-r))$ is located, where the weight of such a path is the product of the weights of its steps; the weight of the $i$th step is $m-i$ if it goes to the right; and $1$ if it goes to the left.  By reversing it, we can consider such a path as a lattice path from $(0,0)$ to $(r-1,m-r)$. By Lemma~\ref{thm:1}, we have $b(m,r)=s(m,r)$, which finishes the proof.
\end{proof}

\begin{figure}
  \centering
\def\mybox#1{\makebox[.2cm]{$#1$\phantom{\vdots}}}
\[\small
\begin{psmatrix}[npos=.9,nodesep=5pt,rowsep=.5cm,colsep=.2cm,mnodesize=1cm]
&&  & & \mybox{a(m,n)} & & &&\\
&&& \mybox{a(m-1,n)} & & \mybox{a(m-1,n-1)}& &&\\
&& \mybox{a(m-2,n)} & & \mybox{a(m-2,n-1)} & & \mybox{a(m-2,n-2)} &&\\
 &\mybox{\rput{80}(0,.2){\ddots}}&&\mybox{\vdots}&&\mybox{\vdots}&&\mybox{\ddots}&\\
 \mybox{a(1,n)} && \mybox{a(1,n-1)} && \mybox{\cdots} && 
 \mybox{a(1,n-m+2)} && \mybox{a(1,n-m+1)}\\
\ncline{1,5}{2,4} \ncline{1,5}{2,6}
\ncline{2,4}{3,3} \ncline{2,4}{3,5} \ncline{2,6}{3,5} \ncline{2,6}{3,7}
\ncline{3,3}{4,2} \ncline{3,3}{4,4} \ncline{3,5}{4,4} \ncline{3,5}{4,6}
\ncline{3,7}{4,6} \ncline{3,7}{4,8}
\ncline{4,2}{5,1} \ncline{4,2}{5,3}
\ncline{4,4}{5,3} \ncline{4,4}{5,5}
\ncline{4,6}{5,5} \ncline{4,6}{5,7}
\ncline{4,8}{5,7} \ncline{4,8}{5,9}
\end{psmatrix}\]
 \caption{The recurrence relation for $a(m,n)$ gives us a diagram similar to Pascal's triangle.}
  \label{fig:lattice}
\end{figure}

By Lemma~\ref{thm:5}, Lemma~\ref{thm:firstkind} and the equation $h_k(1,j)=f_k(j)$, we get the following corollary.

\begin{cor}\label{thm:4}
If $m\leq k-1$, then 
\[h_k(m,n) = \sum_{r=1}^m s(m,r) f_k(n-m+r).\]  
\end{cor}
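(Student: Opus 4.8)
The plan is to obtain Corollary~\ref{thm:4} as an immediate consequence of the two preceding lemmas together with the identity $h_k(1,n)=f_k(n)$ recorded just before Lemma~\ref{thm:5}. The key observation is that Lemma~\ref{thm:5} says precisely that the two-variable array $a(i,j):=h_k(i,j)$ satisfies the recurrence $a(i,j)=a(i-1,j)-(i-1)a(i-1,j-1)$ hypothesized in Lemma~\ref{thm:firstkind}, valid as long as the first index stays in the range $2\leq i\leq k$ where Lemma~\ref{thm:5} applies.

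First I would fix $m\leq k-1$. Since $m\leq k-1\leq k$, the recurrence of Lemma~\ref{thm:5} is available for every $i$ with $2\leq i\leq m$, so upon setting $a(i,j)=h_k(i,j)$ the hypotheses of Lemma~\ref{thm:firstkind} are satisfied (with the role of its ``$m$'' played by our $m$, and $j$ ranging over the values actually needed). Applying Lemma~\ref{thm:firstkind} then gives
\[
h_k(m,n)=a(m,n)=\sum_{r=1}^m s(m,r)\,a(1,n-m+r)=\sum_{r=1}^m s(m,r)\,h_k(1,n-m+r),
\]
and substituting $h_k(1,n-m+r)=f_k(n-m+r)$ yields the claimed formula.

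The only point requiring a little care is the bookkeeping of index ranges: one must check that Lemma~\ref{thm:5} is invoked solely with $2\leq i\leq k$ (true since $i\leq m\leq k-1$) and that $h_k$ is well defined at all arguments appearing on the right-hand side (true, since $h_k(j,n)$ makes sense — possibly equal to $0$ — for every nonnegative $n$). I do not expect any genuine obstacle here; the substance of the statement is entirely contained in Lemmas~\ref{thm:5} and~\ref{thm:firstkind}, and the corollary is just their composition with the base case $h_k(1,n)=f_k(n)$.
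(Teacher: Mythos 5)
Your proposal is correct and is essentially identical to the paper's proof, which likewise obtains the corollary by combining Lemma~\ref{thm:5} (to verify the hypothesis of Lemma~\ref{thm:firstkind} for $a(i,j)=h_k(i,j)$), Lemma~\ref{thm:firstkind}, and the identity $h_k(1,j)=f_k(j)$. The index bookkeeping you flag ($2\leq i\leq m\leq k-1\leq k$) is the only point of substance, and you handle it correctly.
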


With Corollary~\ref{thm:4} and \eqref{eq:fkn1}, we finish the proof of Theorem~\ref{thm:main}.

As a corollary of Lemma~\ref{thm:firstkind}, we can get a result on the number of partitions with certain condition. Let $p(n)$ denote the number of partitions of $[n]$.  Let $p(m,n)$ denote the number of partitions of $[n]$ such that $1,2,\ldots,m$ are heads.

\begin{cor}
  Let $0\leq m\leq n$. Then
\[p(m,n) = \sum_{r=1}^{m} s(m,r) p(n-m+r).\]
\end{cor}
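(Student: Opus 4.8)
The plan is to mimic exactly the chain of reasoning that produced Corollary~\ref{thm:4}, but applied to unrestricted partitions rather than $k$-front noncrossing ones. First I would define, for $2\le i\le m$, the set $P(i,n)$ of all partitions of $[n]$ in which $1,2,\dots,i$ are heads, so that $|P(i,n)|=p(i,n)$ and $P(i,n)\subset P(i-1,n)$. The key combinatorial observation is the analogue of Lemma~\ref{thm:5}: a partition $\pi\in P(i-1,n)\setminus P(i,n)$ is exactly one in which $1,\dots,i-1$ are heads but $i$ is not, i.e.\ $(j,i)$ is a front edge for some $j\in[i-1]$; deleting $i$ gives $\pi'=\pi\setminus i\in P(i-1,n-1)$, and conversely each $\pi'\in P(i-1,n-1)$ arises from exactly $i-1$ such $\pi$ (one for each choice of the head $j\in[i-1]$ to which we attach a new element $i$). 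Note that here, unlike in the $k$-front noncrossing setting, there is no avoidance condition to check, so this step is even simpler: the bijection is unconditional. This yields
\begin{equation*}
p(i,n)=p(i-1,n)-(i-1)\,p(i-1,n-1)\qquad(2\le i\le m).
\end{equation*}

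Next I would simply invoke Lemma~\ref{thm:firstkind} with $a(i,j)=p(i,j)$: the recurrence just established is precisely $a(i,j)=a(i-1,j)-(i-1)a(i-1,j-1)$, valid for all $2\le i\le m$ and all relevant $j$, and Lemma~\ref{thm:firstkind} then gives
\begin{equation*}
p(m,n)=a(m,n)=\sum_{r=1}^{m}s(m,r)\,a(1,n-m+r)=\sum_{r=1}^{m}s(m,r)\,p(n-m+r),
\end{equation*}
using that $a(1,j)=p(1,j)=p(j)$ since requiring only ``$1$ is a head'' is no restriction at all. The edge cases $m=0$ (empty condition, $p(0,n)=p(n)$, and the sum is empty unless we interpret $s(0,0)=1$) and $m=1$ should be checked to match the convention in the statement; for $m\ge 1$ the argument above is complete.

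The only genuine obstacle is bookkeeping with the range of validity of the recurrence so that Lemma~\ref{thm:firstkind} applies: the lemma needs $a(i,j)=a(i-1,j)-(i-1)a(i-1,j-1)$ for $2\le i\le m$ and $1\le j\le n$, and one must make sure $p(i,j)$ is well-defined and the recurrence holds at the boundary (e.g.\ when $i>j$, where $p(i,j)=0$, or $i=j$, where $p(i,j)=1$ — consistent with deleting the singleton-free structure). Since these are trivially verified, and since $m\le n$ is assumed, there is no real difficulty; the corollary follows immediately. I would phrase the proof in two or three sentences: establish the recurrence for $p(m,n)$ by the deletion bijection, then apply Lemma~\ref{thm:firstkind}.

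\begin{proof}
For $2\le m'\le m$, let $P(m',n)$ denote the set of partitions of $[n]$ in which $1,2,\ldots,m'$ are all heads, so $|P(m',n)|=p(m',n)$ and $P(m',n)\subset P(m'-1,n)$. If $\pi\in P(m'-1,n)\setminus P(m',n)$, then $1,\ldots,m'-1$ are heads but $m'$ is not, so $(j,m')$ is a front edge of $\pi$ for a unique $j\in[m'-1]$; setting $\pi'=\pi\setminus m'$ gives $\pi'\in P(m'-1,n-1)$. Conversely, for each $\pi'\in P(m'-1,n-1)$ and each $j\in[m'-1]$, adding a new element $m'$ to the block of $\pi'$ whose head is $j$ produces a partition in $P(m'-1,n)\setminus P(m',n)$, and every such partition arises exactly once this way. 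Hence
\begin{equation*}
p(m',n)=p(m'-1,n)-(m'-1)\,p(m'-1,n-1),\qquad 2\le m'\le m.
\end{equation*}
Now apply Lemma~\ref{thm:firstkind} with $a(i,j)=p(i,j)$; since $a(1,j)=p(1,j)=p(j)$, we obtain
\begin{equation*}
p(m,n)=\sum_{r=1}^{m} s(m,r)\,p(n-m+r),
\end{equation*}
as claimed. The case $m=0$ is the trivial identity $p(0,n)=p(n)$.
\end{proof}
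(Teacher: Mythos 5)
Your proof is correct and takes essentially the same route as the paper's: establish the recurrence $p(m,n)=p(m-1,n)-(m-1)\,p(m-1,n-1)$ by the same deletion argument used for Lemma~\ref{thm:5}, note $p(1,n)=p(n)$, and apply Lemma~\ref{thm:firstkind}. You merely spell out the bijection that the paper leaves implicit.
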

\begin{proof}
Note that $p(1,n)=p(n)$. Using the same argument in the proof of Lemma~\ref{thm:5}, we get $p(m,n)=p(m-1,n)-(m-1)p(m-1,n-1)$. By Lemma~\ref{thm:firstkind}, we are done.
\end{proof}

In fact, we can obtain the generating function for $p(m,n)$ as follows.
\begin{prop}
Let $m$ be a fixed nonnegative integer. Then
\[\sum_{n\geq0} p(m,m+n) \frac{x^n}{n!} =
\exp(mx + e^x-1).\]  
\end{prop}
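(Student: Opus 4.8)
The plan is to prove the proposition combinatorially, by recognizing the right-hand side as a product of two familiar exponential generating functions and then proving the corresponding binomial convolution identity by an explicit bijection. Recall that $e^{e^x-1}=\sum_{n\geq0}p(n)\frac{x^n}{n!}$ is the exponential generating function for set partitions (the Bell numbers), while $e^{mx}=\sum_{k\geq0}m^k\frac{x^k}{k!}$ is the exponential generating function counting functions from a $k$-element set to $[m]$. Hence $\exp(mx+e^x-1)=e^{mx}\cdot e^{e^x-1}$ is, by the product rule for exponential generating functions, the generating function enumerating the data consisting of a subset $S$ of an $n$-set, a function $S\to[m]$, and a set partition of the complement of $S$. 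In other words, it suffices to prove the identity
$$p(m,m+n)=\sum_{k=0}^{n}\binom{n}{k}m^{k}\,p(n-k).$$

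To prove this identity I would exhibit the natural bijection. Given a partition $\pi$ of $[m+n]$ in which $1,2,\ldots,m$ are all heads, observe first that no block of $\pi$ contains two of the integers $1,\ldots,m$, since the larger of the two would fail to be a head; thus each block meets $\{1,\ldots,m\}$ in at most one integer. Relabel $m+1,\ldots,m+n$ as $1,\ldots,n$, let $S$ be the set of those integers whose block contains an element of $\{1,\ldots,m\}$, let $f(j)\in[m]$ be that (unique) element for $j\in S$, and let $\rho$ be the partition induced by $\pi$ on the complement of $S$; this last is well defined because every block disjoint from $\{1,\ldots,m\}$ is contained in that complement. Conversely, from a triple $(S,\ f\colon S\to[m],\ \rho)$ one recovers $\pi$ by taking the blocks $\{i\}\cup f^{-1}(i)$ for $i\in[m]$ together with the blocks of $\rho$; since $i$ is smaller than every element of $f^{-1}(i)$, each such $i$ is a head, and the two constructions are visibly mutually inverse. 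Grouping by $k=|S|$ gives exactly the displayed identity, and reading it back through the product rule for exponential generating functions yields $\sum_{n\geq0}p(m,m+n)\frac{x^n}{n!}=e^{mx}e^{e^x-1}=\exp(mx+e^x-1)$.

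There is no genuine obstacle in this argument; the only step needing a word of care is the remark that the head condition forces each block to meet $\{1,\ldots,m\}$ at most once, which is what makes the function $f$ well defined, and it also handles the degenerate case $m=0$ uniformly. Alternatively, one can derive the proposition from the corollary just proved: writing $B(x)=e^{e^x-1}=\sum_{j}p(j)\frac{x^j}{j!}$, the corollary gives $\sum_{n\geq0}p(m,m+n)\frac{x^n}{n!}=\sum_{r=1}^{m}s(m,r)B^{(r)}(x)$, and from $B'=e^xB$ together with the Touchard-polynomial recurrence $\phi_{r+1}(t)=t\bigl(\phi_r(t)+\phi_r'(t)\bigr)$ one gets $B^{(r)}(x)=B(x)\phi_r(e^x)$; then the orthogonality relation $\sum_r s(m,r)S(r,k)=\delta_{m,k}$ collapses $\sum_{r=1}^m s(m,r)\phi_r(t)=t^m$, so the sum equals $B(x)e^{mx}=\exp(mx+e^x-1)$. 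I would present the bijective proof as the main argument, since it is shorter and self-contained.
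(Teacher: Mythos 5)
Your proof is correct and follows essentially the same route as the paper: both reduce the statement to the convolution identity $p(m,m+n)=\sum_{i=0}^{n}\binom{n}{i}m^{i}p(n-i)$ and establish it by the same combinatorial decomposition (each of the integers beyond $m$ either attaches to one of the $m$ heads or belongs to a partition of the rest). Your write-up merely makes the bijection and the well-definedness of the map $f$ more explicit than the paper does.
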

\begin{proof}
It is enough to show that
\[p(m,m+n)=\sum_{i=0}^{n} \binom{n}{i} m^i p(n-i).\] Let $\pi$ be a
partition of $[m+n]$ such that $1,2,\ldots,m$ are heads.  Assume that $\pi$
has exactly $i$ front edges $(a,b)$ such that $a\leq m$ and $b\geq m+1$.
Then to construct such $\pi$, there are $m^i$ ways making these $i$ front edges and
$p(n-i)$ ways constructing a partition from the remaining integers. Thus we get the
theorem.
\end{proof}

\subsection{Another object counted by the Catalan
  number}\label{sec:tt}

Recall that $f_2(n)$ is equal to the Catalan number. The object in \textbf{6.19}
$(d^5)$ of Stanley's Catalan Addendum \cite{catalan} (version of 21 August 2010)
is equivalent to a $2$-front noncrossing partition. In this subsection we prove
Theorem~\ref{thm:catalanrec} and find a new combinatorial object counted by the
Catalan number.

\begin{proof}[Proof of Theorem~\ref{thm:catalanrec}]
Let $\pi$ be a $2$-front noncrossing partition of $[n]$ for $n\geq2$. If $n$ is a singleton of $\pi$, then there are $f_2(n-1)$ such $\pi$'s. Assume that $(j,n)$ is a front edge of $\pi$ for some $1\leq j\leq n-1$.  If there is no front edge $(j,t)$ of $\pi$ for $t<n$, then we can easily see that there are $f_2(j-1)f_2(n-1-j)$ such $\pi$'s. Otherwise let $i$ be the largest integer such that $i<n$ and $(j,i)$ is a front edge of $\pi$. Then $2\leq i\leq n-1$.  Let $\pi_1=\pi\cap[i]$ and $\pi_2=\pi\cap \{i+1,i+2,\ldots,n-1\}$.  Then both $\pi_1$ and $\pi_2$ are $2$-front noncrossing partitions, and in $\pi_1$, the largest integer $i$ is not a singleton. Conversely, $\pi$ is obtained from $\pi_1$ by adding all the blocks of $\pi_2$ whose elements are increased by $i$, and by adding $n$ to the block containing $i$. Thus there are $(f_2(i)-f_2(i-1))\cdot f_2(n-1-i)$ such $\pi$'s. Thus we get \[f_2(n)=f_2(n-1) + \sum_{j=1}^{n-1} f_2(j-1)f_2(n-1-j) + \sum_{i=2}^{n-1} f_2(n-1-i) (f_2(i)-f_2(i-1)).\] Since $f_2(n-1)=\sum_{j=1}^{n-1} f_2(j-1)f_2(n-1-j)$, we are done.
\end{proof}

A \emph{plane unary-binary tree} is an unlabeled rooted tree such that each
vertex that is not a leaf can have either a middle child, a left child, a right
child, or both left and right children.

Let $\pi$ be a $2$-front noncrossing partition of $[n]$. We define $\phi(\pi)$ as follows. If $n=1$, then $\phi(\pi)$ is the tree with only one vertex. If $n\geq2$, then $\phi(\pi)$ is recursively defined as follows. 

\begin{description}
\item[Case 1] $n$ is a singleton of $\pi$. Start with a vertex and add a middle child $v$ of it. Attach $\phi(\pi\setminus n)$ to $v$. Then $\phi(\pi)$ is the resulting tree.
\item[Case 2] $(j,n)$ is a front edge of $\pi$ and there is no front edge $(j,t)$ of $\pi$ for $t<n$. Start with a vertex and make a right child $v$ of it. Let
$\pi_1=\pi\cap[j-1]$ and $\pi_2=\pi\cap\{j+1,j+2,\ldots,n-1\}$. Let $\pi'$ be the partition obtained from $\pi_1$ by adding all the blocks of $\pi_2$ whose elements are increased by $j-1$ and by adding $n-1$ to the block containing $j$. Note that $\pi'$ is a $2$-front noncrossing partition of $[n-1]$. We attach $\phi(\pi')$ to $v$. Then $\phi(\pi)$ is the resulting tree.
\item[Case 3] $(j,n)$ is a front edge of $\pi$ and there is a front edge $(j,t)$ of $\pi$ for some $t<n$. Let $i$ be the largest integer such that $i<n$ and $(j,i)$ is a front edge of $\pi$.
  \begin{description}
  \item[Sub-Case 3-a] $2\leq i\leq n-2$. Start with a vertex and make a left child $v$ and a right child $u$ of this vertex. Attach $\phi(\pi\cap[i])$ to $v$ and $\phi(\pi\cap[i+1,n-1])$ to $u$. Then $\phi(\pi)$ is the resulting tree.
 \item[Sub-Case 3-b] $i=n-1$. Start with a vertex and make a left child $v$. Attach $\phi(\pi\cap[n-1])$ to $v$.  Then $\phi(\pi)$ is the resulting tree.
  \end{description}
\end{description}
See Fig.~\ref{fig:cat} for an example of $\phi$.

\begin{figure}
  \centering \vspace{.3cm}
  \pstree{\TC*~[tnpos=r]{
\psset{unit=.3cm,linewidth=.1pt}
\rput(-.5,-.5){\cvput{1}[] \cvput{2}[] \cvput{3}[] \cvput{4}[] \cvput{5}[] \cvput{6}[] \cvput{7}[] \cvput{8}[] \cvput{9}[] \cvput{10}[] \cvput{11}[] \edge3{11} \edge67 \edge68 \edge6{10} \edge45}
}}
{\Tn\Tn
  \pstree{\TC*~[tnpos=r]{
\psset{unit=.3cm,linewidth=.1pt}\rput(-.5,-.5){
\cvput{1}[] \cvput{2}[] \cvput{3}[] \cvput{4}[] \cvput{5}[] \cvput{6}[] \cvput{7}[] \cvput{8}[] \cvput{9}[] \cvput{10}[] \edge34 \edge3{10} \edge56 \edge57 \edge59
}}}
  {
    \pstree{\TC*~[tnpos=r]{
\psset{unit=.3cm,linewidth=.1pt}\rput(-.5,-.5){
\cvput{1}[] \cvput{2}[] \cvput{3}[] \cvput{4}[] \edge34
}}}
    {
      \Tn\Tn
      \pstree{\TC*~[tnpos=r]{
\psset{unit=.3cm,linewidth=.1pt}\rput(-.5,-.5){
\cvput{1}[] \cvput{2}[] \cvput{3}[]
}}}
      {
        \pstree{\TC*~[tnpos=r]{
\psset{unit=.3cm,linewidth=.1pt}\rput(-.5,-.5){
\cvput{1}[] \cvput{2}[]
}}}
{\TC*~[tnpos=r]{
\psset{unit=.3cm,linewidth=.1pt}\rput(-.5,-.5){
\cvput{1}[]
}}}
     }
    }
    \pstree{\TC*~[tnpos=r]{
\psset{unit=.3cm,linewidth=.1pt}\rput(-.5,-.5){
\cvput{1}[] \cvput{2}[] \cvput{3}[] \cvput{4}[] \cvput{5}[] \edge12 \edge13 \edge15
}}}
    {
      \pstree{\TC*~[tnpos=r]{
\psset{unit=.3cm,linewidth=.1pt}\rput(-.5,-.5){
\cvput{1}[] \cvput{2}[] \cvput{3}[] \edge12 \edge13
}}}
      {
        \pstree{\TC*~[tnpos=r]{
\psset{unit=.3cm,linewidth=.1pt}\rput(-.5,-.5){
\cvput{1}[] \cvput{2}[] \edge12
}}}
        {\Tn\Tn
          \TC*~[tnpos=r]{
\psset{unit=.3cm,linewidth=.1pt}\rput(-.5,-.5){
\cvput{1}[]
}}
        } \Tn
     }
      \pstree{\TC*~[tnpos=r]{
\psset{unit=.3cm,linewidth=.1pt}\rput(-.5,-.5){
\cvput{1}[] 
}}}
      {
     }
    }
  }
}
 \caption{An example of the bijection $\phi$. For each vertex $v$, the front representation of the partition corresponding  to the tree with root vertex $v$ is drawn to the right of $v$.}
  \label{fig:cat}
\end{figure}

Note that $\phi(\pi)$ is a plane unary-binary tree and $\phi(\pi)$ does not have
a left child that has a middle child or that is a leaf. Thus a left child of
$\phi(\pi)$ must have a left child or a right child (or both).

\begin{thm}\label{thm:2}
The map $\phi$ is a bijection between the set of noncrossing partitions of
  $[n]$ and the set of plane unary-binary trees with $n$ vertices such
  that a left child must have a left child or a right child (it can have both of them).
  Moreover, if $\phi(\pi)=T$ then the number of blocks of $\pi$ is equal
  to the number of leaves plus the number of vertices having a middle
  child in $T$.
\end{thm}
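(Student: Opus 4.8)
The plan is to prove both assertions simultaneously by induction on $n$, viewing $\phi$ as a recursive rule and building its inverse $\psi$ case by case (recall that noncrossing and $2$-front noncrossing partitions coincide). The base case $n=1$ is immediate: the one-vertex tree is a leaf with no middle child, the unique partition of $[1]$ has one block, and $1=1+0$.

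For $n\geq 2$ I would first check that Case~1, Case~2, Sub-Case~3-a and Sub-Case~3-b are mutually exclusive and exhaust all $2$-front noncrossing partitions of $[n]$: either $n$ is a singleton, or the block of $n$ has exactly two elements $\{j,n\}$, or it has at least three elements, in which case its second largest element $i$ satisfies $2\leq i\leq n-1$ and we are in Sub-Case~3-a when $i\leq n-2$ and in Sub-Case~3-b when $i=n-1$. I would then verify that $\phi(\pi)$ is always a plane unary-binary tree in which every left child has a left or right child. The only new left children are created in Sub-Cases~3-a and 3-b, namely the roots of $\phi(\pi\cap[i])$ and $\phi(\pi\cap[n-1])$; since $i$ (resp.\ $n-1$) is the second largest element of the block of $n$, it is \emph{not} a singleton of $\pi\cap[i]$ (resp.\ $\pi\cap[n-1]$), so Case~1 of the definition of $\phi$ is excluded for that subpartition and its root therefore carries a left or a right child. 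Hence the condition is preserved, and it will turn out to be the only obstruction, by the inverse constructed next.

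To invert $\phi$, given a valid tree $T$ on $n\geq 2$ vertices I read off the configuration at the root, which is not a leaf: it has a middle child (undo Case~1), a single right child (undo Case~2), both a left and a right child (undo Sub-Case~3-a), or a single left child (undo Sub-Case~3-b). In each case I recover the subpartition(s) by the inductive inverse and then reassemble $\pi$ exactly as prescribed in the construction; the hypothesis on $T$ guarantees that in the left-child cases the relevant subpartition has a non-singleton maximum, so the reassembly yields a legitimate $2$-front noncrossing partition. That $\psi\circ\phi$ and $\phi\circ\psi$ are the identity is then a routine induction, once the four cases are known to be in the stated correspondence with the four root configurations.

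Finally I would track the statistic $\operatorname{st}(T)=(\#\text{ leaves of }T)+(\#\text{ vertices of }T\text{ with a middle child})$, noting that these two vertex sets are disjoint. In Case~1 the construction adds a new non-leaf root carrying a middle child, so $\operatorname{st}$ increases by exactly $1$ over $\phi(\pi\setminus n)$, matching the one extra block $\{n\}$ of $\pi$. In Cases~2, 3-a, 3-b the new root is neither a leaf nor a vertex with a middle child, so $\operatorname{st}(\phi(\pi))$ equals the sum of $\operatorname{st}$ over the attached subtrees, which by the inductive hypothesis equals the sum of their numbers of blocks; one then checks this sum equals the number of blocks of $\pi$ — in Sub-Case~3-a using that a $2$-front noncrossing partition has no block straddling $[i]$ and $[i+1,n-1]$ except the block of $n$, which contributes one block to $\pi\cap[i]$ and none to $\pi\cap[i+1,n-1]$; in Sub-Case~3-b because $\pi\cap[n-1]$ has the same number of blocks as $\pi$; and in Case~2 because the passage $\pi\mapsto\pi'$ preserves the block count. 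The main obstacle is precisely Case~2: one must see how $\pi'$ encodes the index $j$ together with $\pi_1$ and $\pi_2$, so that $\pi\mapsto\pi'$ is a bijection from the Case-2 partitions of $[n]$ onto \emph{all} $2$-front noncrossing partitions of $[n-1]$ and neither creates nor destroys blocks.
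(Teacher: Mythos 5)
Your treatment of the bijection itself is sound and is essentially what the paper intends (the paper simply declares the inverse ``straightforward'' and leaves it to the reader): you read the four cases off the root's child configuration, and your observation that the roots of $\phi(\pi\cap[i])$ and $\phi(\pi\cap[n-1])$ cannot be leaves or carry middle children --- because $i$, resp.\ $n-1$, is not a head of the restricted partition --- is exactly the point needed to see that $\phi$ lands in, and the inverse is defined on, the stated class of trees.

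The ``moreover'' part is where your argument fails, and you in fact put your finger on the fatal spot yourself. In Case~2 the passage $\pi\mapsto\pi'$ does \emph{not} preserve the number of blocks when $j<n-1$: there $\pi_2\neq\emptyset$, the element $j$ of the new ground set is the image of $j+1$, and ``adding $n-1$ to the block containing $j$'' adjoins $n-1$ to an already existing block of the shifted $\pi_2$. The two blocks $\{j,n\}$ and the block of $j+1$ of $\pi$ thus collapse into a single block of $\pi'$, so $|\pi'|=|\pi|-1$. Concretely, $\pi=(\{1,3\},\{2\})$ has two blocks, $\pi'=(\{1,2\})$ has one, and $\phi(\pi)$ is the three-vertex path both of whose edges go to right children, with one leaf and no vertex having a middle child. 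This is not a gap you can close: already for $n=3$ the statistic ``leaves plus vertices with a middle child'' takes the values $3,2,2,1,1$ on the five admissible trees, while the block counts of noncrossing partitions of $[3]$ follow the Narayana distribution $3,2,2,2,1$, so \emph{no} bijection between the two sets can carry one statistic to the other. In other words the ``moreover'' clause of the theorem, and the paper's one-line justification that ``each head corresponds to a leaf or a vertex having a middle child,'' are themselves false as stated; the example in the paper's own Figure~\ref{fig:cat} already shows a partition with six blocks sent to a tree with statistic five. What your computation actually proves is that the statistic increases by one in Case~1, is preserved in Sub-Cases~3-a and 3-b and in Case~2 with $j=n-1$, and \emph{drops} by one in Case~2 with $j<n-1$; to get a correct block-counting refinement one must either add to the statistic a term counting the vertices arising from that last configuration or alter the Case~2 construction.
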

\begin{proof}
It is straightforward to find the inverse of $\phi$. We leave it to the reader. The `moreover' statement follows from the fact that each head corresponds to a leaf or a vertex having a middle child.
\end{proof}

\section{Similar approach to $k$-distant noncrossing partitions}
\label{sec:k-dist}

In this section we only consider the standard representation.

Let $k\geq1$.  A \emph{$k$-distant crossing} of a partition $\pi$ is a
set of two standard edges $(i_1,j_1)$ and $(i_2,j_2)$ of $\pi$ such
that $i_1<i_2<j_1<j_2$ and $j_1-i_2\geq k$. Thus a crossing is a
$1$-distant crossing.

A \emph{$k$-distant noncrossing partition} is a partition without
$k$-distant crossings. Let $d_k(n)$ denote the number of $k$-distant
noncrossing partitions of $[n]$.  Drake and Kim \cite{Drake2008} found
the following generating function for $d_2(n)$:
\begin{equation}\label{eq:d2}
\sum_{n\geq0} d_2(n) x^n = \frac{3-3x-\sqrt{1-6x+5x^2}}{2(1-x)}.  
\end{equation}
Since \eqref{eq:d2} is equal to the generating function for $f_3(n)$,
which is the special case $k=3$ of \eqref{eq:mansour}, we have
$d_2(n)=f_3(n)$. Yan \cite{Yan2008a} found bijections between
$12312$-avoiding partitions of $[n]$, UH-free \sch. paths of length
$2n+2$, and \sch. paths of length $2n+2$ without even peaks.  Kim
\cite{Kim2009b} found bijections on these objects together with
$2$-distant noncrossing partitions.

A similar argument in the proof of Theorem~\ref{thm:main} can be used to find a
recurrence relation for $d_2(n)$ and $d_3(n)$.

Recall the definitions of singleton, $\pi\cap A$ and $\pi\setminus j$ in Section~\ref{sec:hfnp}.

\subsection{The number of $2$-distant noncrossing partitions}

\begin{thm}\label{thm:2-dist}
We have $d_2(0)=d_2(1)=1$, and for $n\geq2$,
\begin{equation}
  \label{eq:recd2}
d_2(n)=2d_2(n-1) + \sum_{i=2}^{n-1} d_2(n-i) (d_2(i)-d_2(i-1)).  
\end{equation}
\end{thm}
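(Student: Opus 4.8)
The plan is to mimic the decomposition used in the proof of Theorem~\ref{thm:catalanrec}, but now working with the standard representation and $2$-distant crossings instead of front edges and front crossings. Since $d_2(n) = f_3(n)$ and the target recurrence \eqref{eq:recd2} is literally the same shape as \eqref{eq:f3}, I expect the combinatorial skeleton to be nearly identical, with the crucial difference being how the ``local'' condition (no $2$-distant crossing near vertex $n$) interacts with the split.

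First I would set up the base cases $d_2(0)=d_2(1)=1$ directly. For $n\geq 2$, let $\pi$ be a $2$-distant noncrossing partition of $[n]$ and condition on the role of $n$. If $n$ is a singleton, then $\pi\setminus n$ is again $2$-distant noncrossing, giving $d_2(n-1)$ such partitions. Otherwise $(j,n)$ is a standard edge for some $j$, i.e.\ $j$ is the largest element less than $n$ in the block of $n$. The plan is then to split $[n-1]$ at an appropriate index $i$, set $\pi_1 = \pi\cap[i]$ and $\pi_2 = \pi\cap\{i+1,\dots,n-1\}$ (discarding $n$), and argue that $\pi$ is determined by the pair $(\pi_1,\pi_2)$ subject to one extra condition on $\pi_1$ — that its largest element is not a singleton — exactly as in the front-representation proof. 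The key point to verify is the choice of $i$: since $(j,n)$ is a standard edge and a $2$-distant crossing requires the ``gap'' $j_1 - i_2 \geq 2$, the correct cut should be at $i$ equal to the largest element that is connected (through $j$ or through the structure below) so that no standard edge straddles $i$ and $i+1$ in a way that would create a $2$-distant crossing with $(j,n)$. I would spell out that any standard edge $(a,b)$ with $a\leq i < i+1 \leq b$, together with $(j,n)$ (when the nesting/crossing inequalities hold), forces $j - a \geq 2$ or similar, contradicting $2$-distant noncrossingness; this is what licenses the clean product decomposition.

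Having established the decomposition, I would collect the counts: $d_2(n-1)$ from the singleton case; a term of the form $\sum_j d_2(j-1)\, d_2(n-1-j)$ from the case where $(j,n)$ is the only edge at $j$ below $n$ (this sums to $d_2(n-1)$ by the Catalan-type convolution, which is legitimate since for $2$-distant partitions the ``no edge straddling'' subcase behaves like the noncrossing one here — I'd need to double-check this identity holds for $d_2$, which is the one spot I am least sure of); and $\sum_{i=2}^{n-1} (d_2(i) - d_2(i-1))\, d_2(n-1-i)$ from the main case, where $d_2(i)-d_2(i-1)$ counts $2$-distant noncrossing partitions of $[i]$ whose largest element is not a singleton. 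Summing gives $d_2(n) = 2d_2(n-1) + \sum_{i=2}^{n-1} d_2(n-1-i)(d_2(i)-d_2(i-1))$, which is \eqref{eq:f2}, whereas the claimed \eqref{eq:recd2} has $d_2(n-i)$ rather than $d_2(n-1-i)$. So the main obstacle — and the place where the argument genuinely differs from Theorem~\ref{thm:catalanrec} — is that the $2$-distant structure shifts the split index by one: the extra distance-$1$ slack means the cut must be placed at $i+1$ rather than $i$, so $\pi_2 = \pi\cap\{i+1,\dots,n\}$ (a partition of $[n-i]$, not $[n-1-i]$), and correspondingly one must re-examine whether $n$ is reattached inside $\pi_2$ or separately. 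Getting this shift right, and re-checking that the ``only one edge at $j$'' subcase still telescopes against $2d_2(n-1)$ after the shift, is the delicate part; everything else is bookkeeping parallel to the earlier proof.
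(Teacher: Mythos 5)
There is a genuine gap: you have transplanted the case structure of the front-representation proof of Theorem~\ref{thm:catalanrec}, but the paper's proof of Theorem~\ref{thm:2-dist} uses a different (and simpler) split that your sketch never reaches. The correct dichotomy is on the \emph{distance} of the standard edge at $n$: either $n$ is a singleton or $(n-1,n)$ is a standard edge (each contributes $d_2(n-1)$, giving the term $2d_2(n-1)$ at once), or $(i-1,n)$ is a standard edge with $2\leq i\leq n-1$. In the latter case the $2$-distant noncrossing condition forces every other standard edge to lie entirely inside $[i]$ or entirely inside $\{i,i+1,\ldots,n-1\}$, so one sets $\pi_1=\pi\cap[i]$ and $\pi_2=\pi\cap\{i,\ldots,n-1\}$ with the vertex $i$ \emph{shared} between the two pieces; this overlap is exactly why $\pi_2$ lives on $n-i$ elements and the recurrence contains $d_2(n-i)$, not $d_2(n-1-i)$. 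The constraint on $\pi_1$ is that $(i-1,i)$ is not a standard edge (it cannot be, since $(i-1,n)$ is), and such partitions are counted by $d_2(i)-d_2(i-1)$; your condition ``the largest element of $\pi_1$ is not a singleton'' is the front-representation condition and is not the right one here.

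Your own red flag is also fatal as written: the identity $\sum_{j} d_2(j-1)d_2(n-1-j)=d_2(n-1)$ is the Catalan convolution and is \emph{false} for $d_2$. Already at $n=5$ one has $d_2(0)d_2(3)+d_2(1)d_2(2)+d_2(2)d_2(1)+d_2(3)d_2(0)=5+2+2+5=14$, while $d_2(4)=15$. So the subcase you hoped would ``telescope against $2d_2(n-1)$'' cannot be salvaged in that form; moreover the subcase itself (``$(j,n)$ is the only edge at $j$ below $n$'') has no meaningful analogue in the standard representation, where a vertex has at most one standard edge going to its right. You correctly sense that the cut must be shifted, but the resolution is the overlapping decomposition above rather than moving $\pi_2$ to $\{i+1,\ldots,n\}$. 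As it stands the proposal does not yield \eqref{eq:recd2}.
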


\begin{proof}
  Let $\pi$ be a $2$-distant noncrossing partition of $[n]$ for $n\geq2$. If $n$ is a singleton of $\pi$ or $(n-1,n)$ is a standard edge of $\pi$, then there are $d_2(n-1)$ such $\pi$'s. Otherwise $(i-1,n)$ is a standard edge of $\pi$ for some $2\leq i\leq n-1$.  Let $\pi_1=\pi\cap[i]$ and $\pi_2=\pi\cap\{i,i+1,\ldots, n-1\}$. Note that $\pi$ can be reconstructed from $\pi_1$ and $\pi_2$. Both $\pi_1$ and $\pi_2$ are $2$-distant noncrossing partitions, and $(i-1,i)$ is not a standard edge of $\pi_1$. Thus the number of such $\pi_1$'s is $d_2(i)-d_2(i-1)$, and the number of such $\pi_2$'s is $d_2(n-i)$. Thus we get \eqref{eq:recd2}.
\end{proof}

Since they have the same recurrence relations \eqref{eq:f3} and \eqref{eq:recd2}, there is a recursive bijection between the set of $12312$-avoiding partitions of $[n]$ and the set of $2$-distant noncrossing partitions of $[n]$. See \cite{Kim2009b} for a direct bijection.  As a consequence of Theorem~\ref{thm:2-dist}, we can find another object counted by $d_2(n)=f_3(n)$.

A \emph{plane binary tree} is an unlabeled rooted tree such that each vertex
that is not a leaf can have a left child, a right child, or both left and right
children. A vertex of a plane binary tree is called \emph{fulfilled} if it has
both left and right children; otherwise it is called \emph{unfulfilled}.

Let $\pi$ be a $2$-distant noncrossing partition of $[n]$. We define $\psi(\pi)$ as follows.  If $n=1$, then $\psi(\pi)$ is the tree with only one vertex. If $n\geq2$, then we construct $\psi(\pi)$ recursively as follows.
\begin{description}
\item[Case 1] $n$ is a singleton of $\pi$. Start with a vertex and make a left child $v$ of it. Attach $\psi(\pi\setminus n)$ to $v$. The resulting tree is $\psi(\pi)$. 
\item[Case 2] $(n-1,n)$ is a standard edge of $\pi$.  Start with a vertex and make a right child $v$ of it. Attach $\psi(\pi\setminus n)$ to $v$. The resulting tree is $\psi(\pi)$. 
\item[Case 3] $(i-1,n)$ is a standard edge of $\pi$ for some $2\leq i\leq n-1$. Start with a vertex and make a left child $v$ and a right child $u$ of the vertex.
Attach $\psi(\pi\cap[i])$ to $v$ and $\psi(\pi\cap\{i,i+1,\ldots,n-1\})$ to $u$. 
The resulting tree is $\psi(\pi)$. 
\end{description}
For example, see Fig.~\ref{fig:2d}.

\begin{figure}
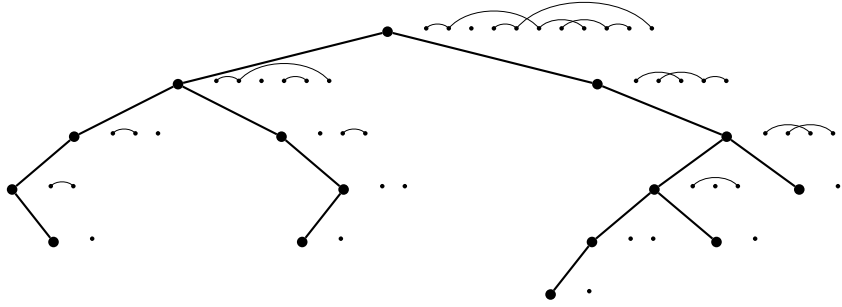

  \centering\vspace{.3cm}
  \pstree{\TC*~[tnpos=r]{
\psset{unit=.3cm,linewidth=.1pt}
\rput(-.5,-.5){\cvput{1}[] \cvput{2}[] \cvput{3}[] \cvput{4}[] \cvput{5}[] \cvput{6}[] \cvput{7}[] \cvput{8}[] \cvput{9}[] \cvput{10}[] \cvput{11}[]  \edge12 \edge26 \edge45 \edge5{11} \edge79 \edge9{10} \edge68
}}
}
{
    \pstree{\TC*~[tnpos=r]{
\psset{unit=.3cm,linewidth=.1pt}
\rput(-.5,-.5){\cvput{1}[] \cvput{2}[] \cvput{3}[] \cvput{4}[] \cvput{5}[] \cvput{6}[] 
\edge12\edge26 \edge45
}}}{
      \pstree{\TC*~[tnpos=r]{
\psset{unit=.3cm,linewidth=.1pt}
\rput(-.5,-.5){\cvput{1}[] \cvput{2}[] \cvput{3}[] \edge12 
}}}{\pstree{\TC*~[tnpos=r]{
\psset{unit=.3cm,linewidth=.1pt}
\rput(-.5,-.5){\cvput{1}[] \cvput{2}[] \edge12
}}}{\Tn\TC*~[tnpos=r]{
\psset{unit=.3cm,linewidth=.1pt}
\rput(-.5,-.5){\cvput{1}[]
}}}\Tn}
      \pstree{\TC*~[tnpos=r]{
\psset{unit=.3cm,linewidth=.1pt}
\rput(-.5,-.5){\cvput{1}[] \cvput{2}[] \cvput{3}[] \edge23
}}}{\Tn
\pstree{\TC*~[tnpos=r]{
\psset{unit=.3cm,linewidth=.1pt}
\rput(-.5,-.5){\cvput{1}[]\cvput{2}[]
}}}{\TC*~[tnpos=r]{
\psset{unit=.3cm,linewidth=.1pt}
\rput(-.5,-.5){\cvput{1}[]}}\Tn}
      }
    }
    \pstree{\TC*~[tnpos=r]{
\psset{unit=.3cm,linewidth=.1pt}
\rput(-.5,-.5){\cvput{1}[] \cvput{2}[] \cvput{3}[] \cvput{4}[] \cvput{5}[] 
\edge24\edge45 \edge13
}}}{\Tn
      \pstree{\TC*~[tnpos=r]{
\psset{unit=.3cm,linewidth=.1pt}
\rput(-.5,-.5){\cvput{1}[] \cvput{2}[] \cvput{3}[] \cvput{4}[] \edge24 \edge13
}}}{
      \pstree{\TC*~[tnpos=r]{
\psset{unit=.3cm,linewidth=.1pt}
\rput(-.5,-.5){\cvput{1}[] \cvput{2}[] \cvput{3}[] \edge13
}}}{
        \pstree{\TC*~[tnpos=r]{
\psset{unit=.3cm,linewidth=.1pt}
\rput(-.5,-.5){\cvput{1}[] \cvput{2}[]
}}}{\TC*~[tnpos=r]{
\psset{unit=.3cm,linewidth=.1pt}
\rput(-.5,-.5){\cvput{1}[]
}}\Tn}
        \TC*~[tnpos=r]{
\psset{unit=.3cm,linewidth=.1pt}
\rput(-.5,-.5){\cvput{1}[]
}}
   }
     \TC*~[tnpos=r]{
\psset{unit=.3cm,linewidth=.1pt}
\rput(-.5,-.5){\cvput{1}[] 
}}
}
    }
  }
 \caption{An example of the bijection $\psi$. For each vertex $v$, the standard representation of the partition corresponding  to the tree with root vertex $v$ is drawn to the right of $v$.}
 \label{fig:2d}
\end{figure}

Let $\pi$ is a $2$-distant noncrossing partition of $[n]$. Then $\psi(\pi)$ is a plane binary tree with $n$ unfulfilled vertices. It is not difficult to see that the left child of a fulfilled vertex must have a left child. Thus we get the following theorem.

\begin{thm}\label{thm:3}
The map $\psi$ is a bijection between the set of $2$-distant noncrossing partitions of $[n]$ and the set of plane binary trees with $n$ unfulfilled vertices satisfying the following condition: the left child of a fulfilled vertex must have a left child.
 Moreover, if $\psi(\pi)=T$, then 
 \begin{itemize}
 \item the number of blocks of $\pi$ is equal to the number of leaves plus the number of vertices with only a left child minus the number of fulfilled vertices,
 \item the number of $1$-distant crossings of $\pi$ is equal to the number of pairs $(u,v)$ of fulfilled vertices in $T$ such that $v$ is the left child of $u$.
\end{itemize}
\end{thm}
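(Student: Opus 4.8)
The plan is to verify by induction on $n$ that $\psi$ lands in the claimed set and is reversible. First I would argue that $\psi(\pi)$ is always a plane binary tree with exactly $n$ unfulfilled vertices: each of Cases~1--3 creates one new vertex at the root and, inductively, the attached subtrees account for the remaining $n-1$ vertices; the new root is unfulfilled in Cases~1 and~2 (one child) and fulfilled in Case~3 (two children), while the roots of the attached subtrees become unfulfilled only if they already were. Counting unfulfilled vertices: in Case~1 and Case~2 we add one unfulfilled vertex (the new root) and lose none, giving $(n-1)+1=n$; in Case~3 the new root is fulfilled, but its left child is the root of $\psi(\pi\cap[i])$, which was an unfulfilled vertex of that subtree on at most $i$ vertices, and remains unfulfilled, so the total is $i + (n-i) = n$. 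The structural constraint ``the left child of a fulfilled vertex must have a left child'' is produced exactly by Case~3: there the left child $v$ of the fulfilled root is $\psi(\pi\cap[i])$ with $i\ge 2$, and $\psi$ applied to a partition on $\ge 2$ points always produces a root with a \emph{left} child (Cases~1 and~3 give a left child directly, and Case~2 cannot occur at this subtree because $(i-1,i)$ is not a standard edge of $\pi\cap[i]$, which is precisely the condition defining Case~3). This last point is the one place where the distant-noncrossing structure is genuinely used, so I would spell it out carefully.

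Next I would exhibit the inverse. Given a tree $T$ in the target set, read the root: if it has only a left child, the preimage has $n$ as a singleton (invert Case~1); if it has only a right child, $(n-1,n)$ is a standard edge (invert Case~2); if it is fulfilled with left subtree $T_1$ on $i$ vertices and right subtree $T_2$ on $n-i$ vertices, reconstruct $\pi\cap[i]=\psi^{-1}(T_1)$ and $\pi\cap\{i,\ldots,n-1\}=\psi^{-1}(T_2)$ and glue them by overlapping at the point $i$, then add $n$ to the block of $i-1$ — the constraint on $T_1$'s root guarantees $2 \le i \le n-1$ and that $(i-1,i)$ is not a standard edge, so the reconstruction is consistent and stays $2$-distant noncrossing. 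Since each case of $\psi$ is reversible and the cases are mutually exclusive and exhaustive (decided by whether $n$ is a singleton, whether $(n-1,n)$ is a standard edge, or otherwise), $\psi$ is a bijection. I would note that the well-definedness of the gluing in Case~3 — that no new $k$-distant crossing with $k\ge 2$ is created when the edge $(i-1,n)$ is added over the block structure on $\{i,\ldots,n-1\}$ — is exactly the content of the case analysis in the proof of Theorem~\ref{thm:2-dist}, which I would cite rather than repeat.

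For the ``moreover'' statements I would track the two statistics through the recursion. Each block of $\pi$ has a head; under $\psi$, a head corresponds either to a leaf of $T$, or to a vertex with only a left child, \emph{except} that a fulfilled vertex arising in Case~3 comes from gluing $\pi\cap[i]$ and $\pi\cap\{i,\ldots,n-1\}$ at the common point $i$, which merges two heads into one — this double-count is corrected by subtracting the number of fulfilled vertices, yielding (number of leaves) $+$ (number of vertices with only a left child) $-$ (number of fulfilled vertices). For the crossing count, observe that a $1$-distant crossing of $\pi$ is a pair of standard edges $(i_1,j_1),(i_2,j_2)$ with $i_1<i_2<j_1<j_2$; in the recursive decomposition, such a crossing is never created within Cases~1 or~2 (we only add an edge to a vertex adjacent to the current maximum), and in Case~3 the new edge $(i-1,n)$ crosses an edge of $\pi$ iff $\pi\cap[i]$ has an edge incident to $i$, i.e.\ iff the root of $T_1$ is fulfilled — so each crossing of $\pi$ corresponds bijectively to a fulfilled vertex whose parent is a fulfilled vertex of which it is the left child, and conversely. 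The main obstacle I anticipate is not any single step but the bookkeeping in Case~3: making precise the ``overlap at $i$'' operation and checking that the head-merging and crossing-creation are counted with exactly the right signs; everything else is a routine induction.
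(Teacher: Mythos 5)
Your proposal is correct and takes essentially the same route as the paper, whose proof simply asserts that the inverse of $\psi$ is straightforward and that the ``moreover'' statements follow by induction; you have supplied exactly that inductive verification, including the two points where the $2$-distant noncrossing hypothesis is genuinely used (that $(i-1,i)$ is not a standard edge of $\pi\cap[i]$ in Case~3, so the left child of a fulfilled vertex has a left child, and that the only crossing created in Case~3 is $\{(a,i),(i-1,n)\}$ with $a\le i-2$, which occurs precisely when the root of $T_1$ is fulfilled). The one slip is your opening remark that the attached subtrees account for ``the remaining $n-1$ vertices'': the trees have $n$ \emph{unfulfilled} vertices but in general more vertices in total, and it is your subsequent unfulfilled-vertex count that is the correct bookkeeping.
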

\begin{proof}
It is straightforward to find the inverse of $\psi$.  We leave it to the reader. The `moreover' statement is easy to check using induction.
\end{proof}

\subsection{The number of $3$-distant noncrossing partitions}

Let $e_n=d_3(n)$. In this subsection we will find a recurrence relation for $e_n$. 

Let $\pi$ be a $3$-distant noncrossing partition of $[n]$ for $n\geq2$. Then $\pi$ falls into one of the following cases.
\begin{description}
\item[Case 1] $n$ is a singleton of $\pi$. Then there are $e_{n-1}$ such $\pi$'s.
\item[Case 2] $(n-1,n)$ is a standard edge of $\pi$. Then there are $e_{n-1}$ such $\pi$'s.
\item[Case 3] $(n-2,n)$ is a standard edge of $\pi$. Let $\pi'=\pi\setminus n$. Then $\pi'$ is a $3$-distant noncrossing partition and $(n-2,n-1)$ is not a standard edge of $\pi'$. Thus there are $e_{n-1}-e_{n-2}$ such $\pi$'s.
\item[Case 4] $(i-2,n)$ is a standard edge of $\pi$ for some $i$ with $3\leq i\leq n-1$.  Let
$\pi_1=\pi\cap[i]$ and $\pi_2=\pi\cap \{i-1,i,\ldots,n-1\}$. Note that $\pi$ can be reconstructed from $\pi_1$ and $\pi_2$. Then $\pi_1$ and $\pi_2$ satisfy the following conditions:
\begin{itemize}
\item $\pi_1$ (resp.~$\pi_2$) is a $3$-distant noncrossing partition of $[i]$ (resp.~$[n+1-i]$).
\item Neither $(i-2,i-1)$ nor $(i-2,i)$ is a standard edge of $\pi_1$.
\item $(i-1,i)$ is a standard edge of $\pi_1$ if and only if $(1,2)$ is a standard edge of $\pi_2$.
\end{itemize}
Now we have the following two sub-cases.
\begin{description}
\item[Sub-Case 4-1] $(i-1,i)$ is a standard edge of $\pi_1$. Thus $(1,2)$ is a standard edge of $\pi_2$. Then there are $e_{i-1}-e_{i-2}$ such $\pi_1$'s
and $e_{n-i}$ such $\pi_2$'s.
\item[Sub-Case 4-2] $(i-1,i)$ is not a standard edge of $\pi_1$. Thus $(1,2)$ is not a standard edge of $\pi_2$. Then there are
$e_{n+1-i}-e_{n-i}$ such $\pi_2$'s.  We can characterize $\pi_1$ as a
$3$-distant noncrossing partition of $[i]$ such that none of $(i-2,i-1)$, $(i-2,i)$ and $(i-1,i)$ is a standard edge of $\pi_1$.
For a set $S$ of edges, let $a_E$ denote the number of
$3$-distant noncrossing partitions $\sigma$'s of $[i]$ such that all the elements of $S$ are contained in the edge set of the standard representation of $\sigma$.
Then the number of such $\pi_1$'s is equal to
$e_{i}-a_{\{(i-2,i)\}}-a_{\{(i-2,i-1)\}}-a_{\{(i-1,i)\}}+a_{\{(i-2,i-1),(i-1,i)\}}$.
Since $a_{\{(i-2,i)\}}=e_{i-1}-e_{i-2}$,
$a_{\{(i-2,i-1)\}}=a_{\{(i-1,i)\}}=e_{i-1}$ and
$a_{\{(i-2,i-1),(i-1,i)\}}=e_{i-2}$, there are $e_i-3e_{i-1}+2e_{i-2}$ such
$\pi_1$'s.
\end{description}
\end{description}

In summary, we get a recurrence relation for $e_n$. For $n\geq2$,
\begin{equation}
  \label{eq:3-dist1}
e_n = 3e_{n-1}-e_{n-2} + \sum_{i=3}^{n-1} \Big(
  e_{n-i}(e_{i-1}-e_{i-2}) + (e_{n+1-i}-e_{n-i})(e_i-3e_{i-1}+2e_{i-2}) 
\Big).  
\end{equation}

Since the summand in \eqref{eq:3-dist1} is equal to $e_{n-1}-e_{n-2}$ when $i=2$, we get the following theorem.
\begin{thm}\label{thm:3-dist}
We have that $e_0=e_1=1$, and for $n\geq2$,
\[e_n = 2e_{n-1} + \sum_{i=2}^{n-1} \Big(
e_{n-i}(e_{i-1}-e_{i-2}) + (e_{n+1-i}-e_{n-i})(e_i-3e_{i-1}+2e_{i-2})
\Big).\]
\end{thm}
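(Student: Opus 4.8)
The plan is to recognize that Theorem~\ref{thm:3-dist} is just a cleaner repackaging of the recurrence \eqref{eq:3-dist1}, which has already been obtained from the case analysis on the standard edge incident to the vertex $n$ in a $3$-distant noncrossing partition $\pi$ of $[n]$. First I would dispose of the base cases: $[0]$ and $[1]$ each admit exactly one partition, each vacuously $3$-distant noncrossing, so $e_0=e_1=1$. For $n\geq 2$ I would take \eqref{eq:3-dist1} as given.

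The only real step is an algebraic identity. Extend the summation index in \eqref{eq:3-dist1} down to $i=2$ and evaluate the new $i=2$ summand using $e_0=e_1=1$ and $e_2=2$: since $e_1-e_0=0$ and $e_2-3e_1+2e_0=1$,
\[
e_{n-2}(e_1-e_0)+(e_{n-1}-e_{n-2})(e_2-3e_1+2e_0)=e_{n-1}-e_{n-2}.
\]
Because $3e_{n-1}-e_{n-2}=2e_{n-1}+(e_{n-1}-e_{n-2})$, folding this $i=2$ term into the sum over $i\geq 3$ turns the right-hand side of \eqref{eq:3-dist1} into exactly the formula asserted in the theorem. I would also check the boundary values $n=2,3$ directly against \eqref{eq:3-dist1} (for $n=2$ the sum is empty and both sides equal $2$; for $n=3$ both sides equal $5$), confirming that the reindexed formula is valid for all $n\geq 2$.

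If a self-contained argument were wanted instead of invoking \eqref{eq:3-dist1}, the substance lies in the case analysis itself: classify $\pi$ according to whether $n$ is a singleton, $(n-1,n)$ is a standard edge, $(n-2,n)$ is a standard edge, or $(i-2,n)$ is a standard edge for some $3\leq i\leq n-1$; in the last case write $\pi_1=\pi\cap[i]$ and $\pi_2=\pi\cap\{i-1,\dots,n-1\}$ and check that $\pi\mapsto(\pi_1,\pi_2)$ is a bijection onto the pairs of $3$-distant noncrossing partitions meeting the three listed compatibility conditions. The one delicate point there is Sub-Case 4-2, where $\pi_1$ is counted by inclusion-exclusion over the three forbidden edges $(i-2,i-1)$, $(i-2,i)$, $(i-1,i)$, which needs the auxiliary evaluations $a_{\{(i-2,i)\}}=e_{i-1}-e_{i-2}$, $a_{\{(i-2,i-1)\}}=a_{\{(i-1,i)\}}=e_{i-1}$, and $a_{\{(i-2,i-1),(i-1,i)\}}=e_{i-2}$. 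Granting all of this, the proof of the stated theorem reduces to the short manipulation above, so I anticipate no real obstacle; the main care needed is simply to avoid mis-indexing the two families $e_{n-i}$ and $e_{n+1-i}$ and to confirm the two small boundary cases.
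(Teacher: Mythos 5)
Your proposal is correct and matches the paper's own argument: the paper likewise derives \eqref{eq:3-dist1} from the four-case analysis on the edge containing $n$ (with the inclusion--exclusion in Sub-Case 4-2) and then obtains Theorem~\ref{thm:3-dist} by observing that the summand at $i=2$ equals $e_{n-1}-e_{n-2}$, folding it into the sum via $3e_{n-1}-e_{n-2}=2e_{n-1}+(e_{n-1}-e_{n-2})$. Your boundary checks at $n=2,3$ are a harmless addition; no gaps.
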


Since we have a recurrence relation, we can find the generating function
from it. For a simpler calculation, let $d_{n}=e_{n+1}-e_n$,
$dd(n)=\sum_{i=0}^n d_{i}d_{n-i}$ and $ed(n)=\sum_{i=0}^n
e_{i}d_{n-i}$. Note that $d_0=dd(0)=dd(1)=ed(0)=0$,
$d_1=ed(1)=1$. Then by Theorem~\ref{thm:3-dist}, we get the following for $n\geq2$:
\begin{align*}\label{eq:dn-1}
  d_{n-1} &= e_{n-1} + \sum_{i=2}^{n-1} \Big(
  e_{n-i}d_{i-2} + d_{n-i}(d_{i-1}-2d_{i-2}) \Big)\\
  &=e_{n-1} -d_{n-2} + \sum_{i=2}^{n} \Big(
  e_{n-i}d_{i-2} + d_{n-i}(d_{i-1}-2d_{i-2}) \Big)\\
  &=e_{n-2} + \sum_{i=0}^{n-2} \Big(
  e_{i}d_{n-2-i} + d_{i}d_{n-1-i}-2d_{i}d_{n-2-i} \Big)\\
  &=e_{n-2} + ed(n-2) + (dd(n-1)-d_{n-1}d_0) -2dd(n-2)\\
  &=e_{n-2} + ed(n-2) + dd(n-1)-2dd(n-2).
\end{align*}

Let $E=\sum_{n\geq0} e_n x^n$ and $D=\sum_{n\geq0} d_n x^n$. Then
$D=\frac{E-1}x -E$ and $E=\frac{1+xD}{1-x}$. Applying the above
formula for $d_{n-1}$ to the sum $\sum_{n\geq2} d_{n-1} x^{n-1}$,
we get
\[D-d_0 = xE + xED + (D^2-dd(0)) -2xD^2.\]
Substituting $E$ with $\frac{1+xD}{1-x}$, we get
\[(1-3x+3x^2)D^2-(1-2x-x^2)D+x=0.\]
Solving this equation, we get
\[D=\frac{1-2x-x^2-(1-x)\sqrt{1-6x+x^2}}{2(1-3x+3x^2)}.\]
Finally we obtain the following generating function for $e_n$.
\begin{thm}\label{thm:gen3}
We have
\[\sum_{n\geq0} e_n x^n=\frac{2-3x+x^2-x\sqrt{1-6x+x^2}}{2(1-3x+3x^2)}.\]  
\end{thm}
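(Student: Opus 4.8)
The plan is to derive the generating function $E=\sum_{n\ge0}e_nx^n$ directly from the recurrence in Theorem~\ref{thm:3-dist}, following exactly the bookkeeping already set up in the excerpt. First I would record the two auxiliary relations that are essentially definitional: with $d_n=e_{n+1}-e_n$ one has $D=\sum_{n\ge0}d_nx^n=\frac{E-1}{x}-E$ (shift and subtract), and inverting this gives $E=\frac{1+xD}{1-x}$. These are the linear change of variables that turns the quadratic recurrence into something manageable. The point of passing to $d_n$ is that the ``$e_i-3e_{i-1}+2e_{i-2}$'' factor appearing in Theorem~\ref{thm:3-dist} telescopes nicely: $e_i-3e_{i-1}+2e_{i-2}=(e_i-e_{i-1})-2(e_{i-1}-e_{i-2})=d_{i-1}-2d_{i-2}$, and similarly $e_{i-1}-e_{i-2}=d_{i-2}$, $e_{n+1-i}-e_{n-i}=d_{n-i}$, which is precisely what collapses the double sum into convolutions.

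Next I would carry out the manipulation of $d_{n-1}$ displayed in the excerpt: starting from Theorem~\ref{thm:3-dist}, substitute the telescoped factors, re-index so that the sum runs over $i=0,\dots,n-2$ (absorbing the boundary term $-d_{n-2}$ and extending the range to $i=n$, using $dd(n-1)=dd(n-1)-d_{n-1}d_0$ since $d_0=0$), and identify the resulting sums with the convolution quantities $dd(n-1)=\sum_i d_id_{n-1-i}$, $dd(n-2)$, and $ed(n-2)=\sum_i e_id_{n-2-i}$. This yields the clean identity $d_{n-1}=e_{n-2}+ed(n-2)+dd(n-1)-2dd(n-2)$ valid for $n\ge2$, together with the initial data $d_0=0$, $d_1=1$, $dd(0)=dd(1)=0$, $ed(0)=0$, $ed(1)=1$ that make the identity consistent at small $n$ as well (one checks $n=2$ and $n=3$ by hand).

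Then I would convert this to generating functions. Multiplying $d_{n-1}=e_{n-2}+ed(n-2)+dd(n-1)-2dd(n-2)$ by $x^{n-1}$ and summing over $n\ge2$, the left side is $D-d_0=D$; on the right, $\sum_{n\ge2}e_{n-2}x^{n-1}=xE$, $\sum_{n\ge2}ed(n-2)x^{n-1}=x\cdot ED$ (since $\sum_m ed(m)x^m=ED$), $\sum_{n\ge2}dd(n-1)x^{n-1}=\sum_{m\ge1}dd(m)x^m=D^2$ (the $m=0,1$ terms vanish), and $\sum_{n\ge2}dd(n-2)x^{n-1}=x\,D^2$. This gives $D=xE+xED+D^2-2xD^2$. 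Substituting $E=\frac{1+xD}{1-x}$ and clearing the denominator $(1-x)$ produces, after collecting terms, the quadratic $(1-3x+3x^2)D^2-(1-2x-x^2)D+x=0$. Solving by the quadratic formula and selecting the root with $D(0)=0$ (equivalently $D=x+\cdots$, forcing the minus sign in front of the square root and checking the discriminant is $(1-x)^2(1-6x+x^2)$) gives $D=\frac{1-2x-x^2-(1-x)\sqrt{1-6x+x^2}}{2(1-3x+3x^2)}$. Finally, plugging this into $E=\frac{1+xD}{1-x}$ and simplifying — the $(1-x)$ in the numerator of the square-root term cancels the $(1-x)$ denominator — yields $E=\frac{2-3x+x^2-x\sqrt{1-6x+x^2}}{2(1-3x+3x^2)}$, which is the claimed formula.

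The only genuinely delicate points are algebraic rather than conceptual: getting the re-indexing of the sum for $d_{n-1}$ exactly right (the boundary corrections involving $-d_{n-2}$ and the vanishing $m=0,1$ convolution terms), and correctly extracting and simplifying the square root so that the discriminant factors as $(1-x)^2(1-6x+x^2)$ and the spurious $(1-x)$ factors cancel when forming $E$. Branch selection is settled by matching the first couple of Taylor coefficients ($e_0=e_1=1$, $d_0=0$), so there is no real obstacle there. I expect the bulk of the work, and the place most likely to harbor a sign error, to be the passage from $D=xE+xED+D^2-2xD^2$ to the quadratic in $D$ and the subsequent simplification of $E$; everything else is routine.
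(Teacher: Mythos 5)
Your proposal reproduces the paper's own derivation essentially verbatim: the same substitution $d_n=e_{n+1}-e_n$, the same convolution identities $dd$, $ed$, the same reduction to $D=xE+xED+D^2-2xD^2$, the same quadratic $(1-3x+3x^2)D^2-(1-2x-x^2)D+x=0$, and the same back-substitution via $E=\frac{1+xD}{1-x}$. It is correct and takes the same approach as the paper, including the factorization of the discriminant as $(1-x)^2(1-6x+x^2)$ and the branch selection by initial coefficients.
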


Let $b=(b_0,b_1,b_2,\ldots)$ and $\lambda=(\lambda_1,\lambda_2,\ldots)$ be two infinite sequences of real numbers. The \emph{weight} of a Motzkin path with respect to $b$ and $\lambda$ is the product of $b_i$ for each horizontal step of height $i$ and $\lambda_i$ for each down step of height $i$.

Drake and Kim \cite{Drake2008} conjectured that $e_n$ is equal to the sum of weights of Motzkin paths of length $n$ with respect to $b=(b_0,b_1,b_2,\ldots)=(1,2,3,3,3\ldots)$ and $\lambda=(\lambda_1,\lambda_2,\ldots)=(1,2,2,2,\ldots)$. We can prove this
conjecture by finding the generating function for the sum of the weights of Motzkin paths. 

Let $A$ be the generating function for the sum of weights of Motzkin paths with respect to $b$ and $\lambda$. Let $A_1$ (resp.~ $A_2$) be the generating function for the sum of weights of Motzkin paths with respect to $b'=(b_1,b_2,\ldots)$ (resp.~$b''=(b_2,b_3,\ldots)$) and $\lambda'=(\lambda_2,\lambda_3,\ldots)$ (resp.~$\lambda''=(\lambda_3,\lambda_4,\ldots)$). It is easy to check that these generating functions satisfy
\begin{align*}
  A_2 &= 1+3xA_2+2x^2A_2^2,\\
  A_1 &= 1+2xA_1+2x^2A_2A_1,\\
  A &= 1+2xA+2x^2A_1A.
\end{align*}
Solving these equations, we get that $A$ is equal to the generating function in Theorem~\ref{thm:gen3}. Thus we get the following.

\begin{cor}\cite[Conjecture 6.2]{Drake2008}\label{thm:6}
  The number of $3$-distant noncrossing partitions of $[n]$ is equal
  to the sum of weights of Motzkin paths of length $n$ with respect to $b=(b_0,b_1,b_2,\ldots)=(1,2,3,3,3\ldots)$ and $\lambda=(\lambda_1,\lambda_2,\ldots)=(1,2,2,2,\ldots)$.
\end{cor}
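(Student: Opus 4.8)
The plan is to verify that the generating function $A$ for the weighted Motzkin paths coincides with the generating function $E=\sum_{n\ge0}e_nx^n$ computed in Theorem~\ref{thm:gen3}, since both count integer-indexed quantities and two power series with equal coefficients are equal. First I would justify the three stated functional equations by the standard Motzkin-path decomposition: a weighted Motzkin path with respect to $(b,\lambda)$ either is empty, or begins with a horizontal step at height $0$ (contributing $b_0x=x$ in the case of $A$, since $b_0=1$) followed by another such path, or begins with an up step, runs through a path that stays strictly above the $x$-axis, and returns with a down step of height $1$ (contributing $x\cdot\lambda_1\cdot x=x^2$, times the generating function for the "elevated" path, which is exactly a Motzkin path with respect to the shifted sequences $b'=(b_1,b_2,\dots)$ and $\lambda'=(\lambda_2,\lambda_3,\dots)$, i.e.\ $A_1$). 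This gives $A=1+xA+x^2A_1A$; wait — here one must be careful that $b_0=1$ but $b_1=b_2=2$ and $b_i=3$ for $i\ge2$, while $\lambda_1=1$ and $\lambda_i=2$ for $i\ge2$, so the shifted sequence for $A_1$ is $b'=(2,2,3,3,\dots)$, $\lambda'=(2,2,2,\dots)$, and the next shift for $A_2$ is $b''=(2,3,3,\dots)$, $\lambda''=(2,2,\dots)$. Running the same decomposition with these shifted weights yields
\begin{align*}
A &= 1+2xA+2x^2A_1A,\\
A_1 &= 1+2xA_1+2x^2A_2A_1,\\
A_2 &= 1+3xA_2+2x^2A_2^2,
\end{align*}
exactly as claimed. (The apparent coefficient $b_0=1$ versus $2$ discrepancy in the first line is the one genuine subtlety: $b_0=1$, so the horizontal-step term at height $0$ is $xA$, not $2xA$; but the first shift $b'=(b_1,\dots)$ has $b_1=2$, hence the $2xA_1$ in the second equation — so in fact the first equation should read $A=1+xA+2x^2A_1A$, and one should double-check the paper's display against this, or simply accept the paper's stated system and proceed.)

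Next I would solve the system from the bottom up. From the last equation, $2x^2A_2^2-(1-3x)A_2+1=0$ gives $A_2$ as an explicit algebraic function of $x$ (choosing the branch with $A_2(0)=1$). Substituting into the middle equation, $A_1=(1-2xA_1)^{-1}$-type manipulation yields $A_1=\dfrac{1}{1-2x-2x^2A_2}$, again an explicit function. Finally the first equation gives $A=\dfrac{1}{1-2x-2x^2A_1}$ (or $\dfrac{1}{1-x-2x^2A_1}$ if the $b_0=1$ correction is used). The goal is then a routine but slightly tedious elimination of $A_1$ and $A_2$ to produce a single polynomial equation satisfied by $A$, and to check that this polynomial equation is precisely
$$(1-3x+3x^2)A^2-(1-2x-x^2)A+x=0$$
or the corresponding relation whose solution branch with $A(0)=1$ is $\dfrac{2-3x+x^2-x\sqrt{1-6x+x^2}}{2(1-3x+3x^2)}$. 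An efficient route is not to eliminate symbolically but to observe that $D=\frac{A-1}{x}-A$ is already known from the $e_n$-recursion to satisfy $(1-3x+3x^2)D^2-(1-2x-x^2)D+x=0$; so it suffices to show the Motzkin generating function $A$, together with $A_1$, satisfies the relations $D=\frac{A-1}{x}-A$ and $A=\frac{1+xD}{1-x}$ with the same $D$, which reduces to verifying a single polynomial identity after clearing denominators.

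The main obstacle I anticipate is purely bookkeeping: reconciling the weight $b_0=1$ with the structure of the three functional equations, and then carrying out the elimination of $A_1,A_2$ cleanly enough to land exactly on the quadratic for $A$ without sign errors. Neither step is conceptually deep — the Motzkin continued-fraction / transfer-matrix formalism (Flajolet's theory) guarantees the decomposition equations, and the elimination is a finite resultant computation — but the algebra must be done carefully. Once the polynomial identity $A=E$ is confirmed, the corollary follows immediately from Theorem~\ref{thm:gen3} by comparing coefficients, since $e_n=f_3(n)$ by the earlier identification.
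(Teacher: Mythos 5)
Your strategy is exactly the paper's: derive the three functional equations from the first-return decomposition of weighted Motzkin paths, eliminate $A_1,A_2$, and match the result against Theorem~\ref{thm:gen3}. So the issue is not the route but the one step you yourself single out as the crux --- the weight bookkeeping in the equation for $A$ --- and there your resolution is wrong. \emph{Both} $b_0=1$ and $\lambda_1=1$ enter that equation: a first-return arch contributes $x\cdot\lambda_1\cdot x=x^2$ (not $2x^2$), so the correct system is
\begin{align*}
A_2 &= 1+3xA_2+2x^2A_2^2,\\
A_1 &= 1+2xA_1+2x^2A_2A_1,\\
A &= 1+xA+x^2A_1A.
\end{align*}
Your proposed correction $A=1+xA+2x^2A_1A$ gives $[x^2]A=1+2=3$, whereas the actual weighted sum over the two Motzkin paths of length $2$ is $b_0^2+\lambda_1=2$. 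Your fallback of ``simply accept the paper's stated system'' also fails: the paper's displayed $A=1+2xA+2x^2A_1A$ is a typo (it forces $[x]A=2\neq b_0=1$), although its equations for $A_1$ and $A_2$ are correct. Either version of the first equation you would use leads to a generating function that does \emph{not} equal the one in Theorem~\ref{thm:gen3}, so the final comparison step of your proof would break down.

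With the corrected equation everything closes up: solving from the bottom gives $A_2=\frac{1-3x-\sqrt{1-6x+x^2}}{4x^2}$, then $A_1=\frac{1}{1-2x-2x^2A_2}=\frac{1-x-\sqrt{1-6x+x^2}}{2x}$, and finally $A=\frac{1}{1-x-x^2A_1}=\frac{2-3x+x^2-x\sqrt{1-6x+x^2}}{2(1-3x+3x^2)}$, which is precisely Theorem~\ref{thm:gen3}; the corollary then follows by comparing coefficients. Two smaller slips worth fixing: the shifted weight sequences are $b'=(b_1,b_2,\ldots)=(2,3,3,\ldots)$ and $b''=(b_2,b_3,\ldots)=(3,3,\ldots)$, not $(2,2,3,\ldots)$ and $(2,3,3,\ldots)$ as you wrote (your equations for $A_1,A_2$ happen to use the right constants anyway); and the quadratic $(1-3x+3x^2)Y^2-(1-2x-x^2)Y+x=0$ is the one satisfied by $D$, not by $A$ --- the quadratic for $A$ is $(1-3x+3x^2)A^2-(2-3x+x^2)A+1=0$. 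A cheap sanity check that would have caught all of this is to compute the weighted sums for $n\le4$ directly ($1,1,2,5,15$) and compare with the coefficients of each candidate system.
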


Since both objects in Corollary~\ref{thm:6} are purely combinatorial, it would be interesting to find a combinatorial proof of Corollary~\ref{thm:6}.

\begin{problem}
Find a combinatorial proof of Corollary~\ref{thm:6}.
\end{problem}

Unfortunately, our argument is not enough to find a recurrence
relation for $d_k(n)$ for $k\geq4$. Thus we propose the following
problem.
\begin{problem}
  Find a recurrence relation for $d_k(n)$ for $k\geq4$.
\end{problem}

\section{Joint symmetric distributions for the front representation}\label{sec:sym}

In this section we recall several known results on joint symmetric
distributions of crossings and nestings in the standard
representation, and then we show that there are similar properties in
the front representation.  Since the definitions of crossings and
nestings are symmetric, even though we have already defined various
crossings, we will write them again in this section. Throughout this
section $\pi$ denotes a partition of $[n]$.

\subsection{Crossings and nestings in the standard representation}

A \emph{crossing} (resp.~\emph{nesting}) of $\pi$ is a set of two
standard edges $(i_1,j_1)$ and $(i_2,j_2)$ such that $i_1<i_2<j_1<j_2$
(resp.~$i_1<i_2<j_2<j_1$).  A \emph{noncrossing}
(resp.~\emph{nonnesting}) partition is a partition without crossings
(resp.~nestings).

It is well known that the number of noncrossing partitions of $[n]$ is
equal to the number of nonnesting partitions of $[n]$.  More
generally, Kasraoui and Zeng \cite{Kasraoui2006} proved the following:
\begin{equation}
  \label{eq:kz}
\sum_{\pi\in\Pi_n} x^{\mathrm{cr}(\pi)} y^{\mathrm{ne}(\pi)}=
\sum_{\pi\in\Pi_n} x^{\mathrm{ne}(\pi)} y^{\mathrm{cr}(\pi)},  
\end{equation}
where $\mathrm{cr}(\pi)$ (resp.~ $\mathrm{ne}(\pi)$) is the number of
crossings (resp.~ nestings) of $\pi$.

A \emph{$k$-distant crossing} (resp.~ \emph{$k$-distant nesting}) of
$\pi$ is a set of two standard edges $(i_1,j_1)$ and $(i_2,j_2)$ of
$\pi$ such that $i_1<i_2<j_1<j_2$ and $j_1-i_2\geq k$
(resp.~$i_1<i_2<j_2<j_1$ and $j_2-i_2\geq k$).

Drake and Kim \cite{Drake2008} generalized \eqref{eq:kz} as follows:
\begin{equation}
  \label{eq:dk}
\sum_{\pi\in\Pi_n} x^{\dcr_k(\pi)} y^{\dne_k(\pi)}=
\sum_{\pi\in\Pi_n} x^{\dne_k(\pi)} y^{\dcr_k(\pi)},  
\end{equation}
where $\dcr_k(\pi)$ (resp.~ $\dne_k(\pi)$) is the number of $k$-distant
crossings (resp.~ $k$-distant nestings) of $\pi$.

For $r\geq2$, Chen et al.~\cite{Chen2007} defined an
\emph{$r$-crossing} (resp.~ \emph{$r$-nesting}) of $\pi$ to be a set
of $r$ standard edges $(i_1,j_1), (i_2,j_2),\ldots,(i_r,j_r)$ of $\pi$
such that $i_1<i_2<\cdots<i_r<j_1<j_2<\cdots<j_r$ (resp.~
$i_1<i_2<\cdots<i_r<j_r<j_{r-1}<\cdots<j_1$). They showed that
\begin{equation}
  \label{eq:chen}
\sum_{\pi\in\Pi_n} x^{\mathrm{CR}(\pi)} y^{\mathrm{NE}(\pi)}=
\sum_{\pi\in\Pi_n} x^{\mathrm{NE}(\pi)} y^{\mathrm{CR}(\pi)},  
\end{equation}
where $\mathrm{CR}(\pi)$ (resp.~ $\mathrm{NE}(\pi)$) is the largest
integer $r$ such that $\pi$ has an $r$-crossing (resp.~ $r$-nesting).

\subsection{Crossings and nestings in the front representation}

For $k\geq2$, a \emph{$k$-front crossing} (resp.~\emph{$k$-front
  nesting}) of $\pi$ is a set of two front edges $(i_1,j_1)$ and
$(i_2,j_2)$ such that $i_1<i_2<j_1<j_2$ (resp.~$i_1<i_2<j_2<j_1$) and
there are at least $k-2$ heads $h$ with $i_2<h<j_1$ (resp.
$i_2<h<j_2$).

Let $\fcr_k(\pi)$ (resp.~ $\fne_k(\pi)$) denote the number of $k$-front
crossings (resp.~ $k$-front nestings) of $\pi$.  Then we have the
following analog of \eqref{eq:dk}.

\begin{thm}\label{thm:FCR}
  Let $k\geq2$. Then
\[\sum_{\pi\in\Pi_n} x^{\fcr_k(\pi)} y^{\fne_k(\pi)}=
\sum_{\pi\in\Pi_n} x^{\fne_k(\pi)} y^{\fcr_k(\pi)}.\]
\end{thm}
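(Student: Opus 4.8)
I would prove Theorem~\ref{thm:FCR} by constructing an involution $\iota$ on $\Pi_n$ that swaps the statistics $\fcr_k$ and $\fne_k$. The natural strategy is to adapt the Kasraoui--Zeng type argument for \eqref{eq:kz} (and its $k$-distant refinement \eqref{eq:dk}) to the front representation. The key structural observation is that the front edges emanating from a fixed head $h$ are "nested" in a trivial sense: if $h$ is the head of a block $B = \{h = b_0 < b_1 < \cdots < b_m\}$, then the front edges are exactly $(h,b_1),\ldots,(h,b_m)$, all sharing the left endpoint $h$. So a $k$-front crossing or $k$-front nesting is formed by one front edge from a block with head $h_1$ and one front edge from a block with head $h_2 > h_1$, where the separating condition "at least $k-2$ heads strictly between $h_2$ and $\min(j_1,j_2)$'' depends only on $h_2$ and on the smaller right endpoint.

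**Key steps.** First I would set up the right local picture: fix a vertex $j \in [n]$ that is not a head, and record for $j$ the pair $(\ell_j, U_j)$ where $\ell_j$ is the head of the block containing $j$ and $U_j$ is the set of heads (or the relevant count of heads) lying in the appropriate interval. The idea is to process the vertices $j$ that are "upper endpoints'' from right to left (or the heads from right to left), and at each step decide, among the edges currently under consideration that are in crossing-or-nesting position with the edge ending at $j$, how to pair them up — exactly as in the $k$-distant case, one groups edges by their left endpoint's position relative to the $k-2$ intervening heads and reverses the linear order of the "crossing block'' versus "nesting block.'' Second, I would verify that this local reassignment (a) produces another valid set partition — here one must check that the block structure, i.e. which vertices are heads, is preserved, which holds because the map only permutes which left-endpoint (head) each non-head vertex attaches to, never changing the set of heads; and (b) interchanges the total counts $\fcr_k$ and $\fne_k$. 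Third, I would check $\iota$ is an involution, typically automatic from the "reverse the order'' construction. I would likely phrase this cleanly by introducing, for each non-head $j$, a statistic analogous to Kasraoui--Zeng's "number of edges nesting/crossing from the left at $j$,'' and defining $\iota$ as the unique partition realizing the swapped sequence of such statistics.

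**Main obstacle.** The delicate point — and where I expect to spend the most effort — is the "at least $k-2$ intervening heads'' condition interacting with the requirement that $\iota$ preserve the set of heads. When we detach a non-head vertex $j$ from head $h_1$ and reattach it to head $h_2$, we must ensure $h_2$ really is still a head after the global reshuffling, and more subtly that the count of heads strictly between two given heads is stable under $\iota$ (it is, since heads are fixed), so that an edge in $k$-front-crossing position maps to an edge in $k$-front-nesting position with the *same* $k-2$ threshold satisfied. One must be careful that a front edge $(h,j)$ in the image still has $h$ as the head of $j$'s block, i.e. that we never reattach $j$ to a vertex smaller than the current head in a way that creates a new smallest element. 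Keeping the reattachment within "heads larger than the old head but no larger than $\min$ of the two upper endpoints'' (for crossings) versus the symmetric range (for nestings) is what makes the head-set invariant, and checking this compatibility carefully is the crux; the rest follows the template of \cite{Kasraoui2006} and \cite{Drake2008}.
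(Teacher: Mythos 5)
Your plan follows essentially the same route as the paper: the author also adapts the Kasraoui--Zeng involution (as generalized by Drake--Kim, and as in Chen--Wu--Yan) to the front representation, rebuilding the diagram vertex by vertex from right to left with ``half edges,'' keeping the set of heads fixed, and at each head reversing the attachment pattern only among the right endpoints beyond the $k$-threshold. The delicate points you flag --- invariance of the head set and compatibility of the ``at least $k-2$ intervening heads'' condition under the reversal --- are exactly the points the paper's construction is designed to handle, so the proposal is on target.
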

\begin{proof}
We will use a similar argument of Kasraoui and Zeng \cite{Kasraoui2006}, which is later generalized by Drake and Kim \cite{Drake2008}. It is also similar to the proof of Theorem~3.5 in \cite{Chen2008}.

Let $\pi\in\Pi_n$. For $i\in[n+1]$, let $T_i$ be the diagram obtained from the front representation of $\pi$ by removing integers less than $i$. For each front edge $(h,j)$ of $\pi$ with $h<i$ and $j\geq i$, we attach a \emph{half edge} to $j$. For example, see Figure~\ref{fig:half}.

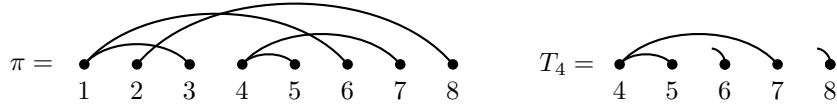
\begin{figure}
  \centering
  \begin{pspicture}(1,0.5)(10,2) \vput{1} \vput{2} \vput{3} \vput{4}
    \vput{5} \vput{6} \vput{7} \vput{8}  \edge{1}{3}
    \edge16 \edge28 \edge45 \edge47 \rput(0,1){$\pi=$}
 \end{pspicture}
  \begin{pspicture}(0,0.5)(4,2) \cvput{1}[4] \cvput{2}[5] \cvput{3}[6] \cvput{4}[7]
    \cvput{5}[8] \edge12 \edge14 \closer3 \closer5 \rput(0,1){$T_4=$}
 \end{pspicture}
 \caption{$\pi$ and its $T_4$.}
  \label{fig:half}
\end{figure}

Now we define $T_i'$ for $i\in[n+1]$. Let $T_{n+1}'=\emptyset$. For $i\in[n]$, $T_i'$ is the diagram obtained from $T_{i+1}'$ by adding $i$ and by doing the following:
\begin{description}
\item[Case 1] $i$ is not a head of $\pi$. Then we attach an half edge to $i$.
\item[Case 2] $i$ is a head of $\pi$. If $i$ is a singleton of $\pi$, then doing nothing. Otherwise let $v_1,v_2,\ldots,v_r,u_1,u_2,\ldots,u_s$ be the integers which have half edges in $T_{i+1}$ such that $v_1<v_2<\cdots<v_r<u_1<u_2<\cdots<u_s$, $v_r-i<k$ and $u_1-i\geq k$. Then the construction guarantees that the integers of $T_{i+1}'$ with half edges are $v_1,v_2,\ldots,v_{r},u_1',u_2',\ldots,u_{s}'$ with $u_1'<u_2'<\cdots<u_{s}'$ and $u_1'-i\geq k$.
For each integer $t\in[r]$, if $(i,v_t)$ is an edge in $T_i$, then we make the edge $(i,v_t)$ in $T_i'$. Let $a_1,a_2,\ldots,a_p$ be the integers such that $a_1<a_2<\cdots<a_p$ and $(i,u_j)$ is an edge in $T_i$ if and only if $j\in\{a_1,a_2,\ldots,a_p\}$. Let $a_t'=a_{p+1-t}$ for $t\in[r]$. Then we make the edge $(i,u_{j}')$ in $T_i'$ for each $j\in\{a_1',a_2',\ldots,a_p'\}$.
\end{description}
We define $\pi'$ to be the partition whose front representation is $T_1'$. It is clear from the construction that the map $\pi\mapsto\pi'$ is an involution. It is not difficult to see that this involution interchanges $\fcr_k$ and $\fne_k$. 
\end{proof}

\begin{remark}\label{remark}
The case $k=2$ in Theorem~\ref{thm:FCR} is a special case of Theorem~3.5 in \cite{Chen2008}. 
\end{remark}

\begin{remark}
  One could define $f_{12\cdots k12}(\pi)$ and $f_{12\cdots k21}(\pi)$
  to be the number of subwords of the canonical word of $\pi$ which
  are order isomorphic to $12\cdots k12$ and $12\cdots k21$
  respectively. In this definition, however, the number of partitions
  $\pi$ of $[n]$ with $f_{12\cdots k12}(\pi)=s$ is not equal to that
  with $f_{12\cdots k21}(\pi)=s$. For example, let $n=5$ and $k=2$.
  If $\pi=(\bk{1,5},\bk{2,3,4})$ whose canonical word is $12221$, then
  $f_{1221}(\pi)=3$. However there is no partition $\sigma\in\Pi_5$
  with $f_{1212}(\sigma)=3$.
\end{remark}

For $r\geq2$, a \emph{front $r$-crossing} (resp.~ \emph{front
  $r$-nesting}) of $\pi$ is a set of $r$ front edges $(i_1,j_1),
(i_2,j_2),\ldots,(i_r,j_r)$ of $\pi$ such that
$i_1<i_2<\cdots<i_r<j_1<j_2<\cdots<j_r$ (resp.~
$i_1<i_2<\cdots<i_r<j_r<j_{r-1}<\cdots<j_1$).  A \emph{weak front
  $r$-crossing} (resp.~ \emph{weak front $r$-nesting}) of $\pi$ is a
set of $r$ front edges $(i_1,j_1), (i_2,j_2),\ldots,(i_r,j_r)$ of
$\pi$ such that $i_1\leq i_2\leq\cdots\leq i_r<j_1<j_2<\cdots<j_r$
(resp.~ $i_1\leq i_2\leq\cdots\leq i_r<j_r<j_{r-1}<\cdots<j_1$).

Let $\FCR(\pi)$ (resp.~ $\FNE(\pi)$) denote the largest integer $r$
such that $\pi$ has a front $r$-crossing (resp.~ front $r$-nesting).
Let $\WFCR(\pi)$ (resp.~ $\WFNE(\pi)$) denote the largest integer $r$
such that $\pi$ has a weak front $r$-crossing (resp.~ weak front
$r$-nesting).  

Now we will recall Krattenthaler's results on fillings of Ferrers diagrams \cite{Krattenthaler2006}.  Let us first define some notions in \cite{Krattenthaler2006}. For a weakly decreasing sequence of positive integers $\lambda=(\lambda_1,\ldots,\lambda_\ell)$, the \emph{Ferrers diagram} of $\lambda$ is the left-justified array of square cells such that the $i$th row has $\lambda_i$ cells.  We will draw a Ferrers diagram in French notation, i.e.~ the first row is placed at the bottom.

A \emph{0-1 filling} of a Ferrers diagram is an assignment of $0$ or $1$ to each cell.  An \emph{ne-chain} (resp.~\emph{se-chain}) of a 0-1 filling is a sequence $c_1,\ldots,c_\ell$ of cells containing $1$, such that $c_{i+1}$ is strictly north-east (resp.~south-east) of $c_i$ for each $i\in[\ell-1]$, and the smallest rectangle containing $c_1$ and $c_\ell$ is fully contained in $F$. In the same manner, we define nE-, sE-, Ne-, Se-, NE- and SE-chains, where `n',`s' and `e' mean strictly north, south and east and `N',`S' and `E' mean weakly north, south and east.

Let $F$ be a Ferrers diagram with $p$ columns and $q$ rows.  Let
$\mathbf{c}=(c_1,\ldots,c_p)$ and $\mathbf{r}=(r_1,\ldots,r_q)$ be
sequences of nonnegative integers and let $s$ and $t$ be nonnegative
integers.  Let
$N^{01}(F,\mathbf{c},\mathbf{r};\mathrm{ne}=s,\mathrm{se}=t)$ denote the
number of 0-1 fillings of $F$ such that the $i$th column has $c_i$
$1$'s, the $j$th row has $r_j$ $1$'s, and the maximum lengths of an ne-chain and an se-chain are $s$ and $t$ respectively. We define
$N^{01}(F,\mathbf{c},\mathbf{r};\mathrm{nE}=s,\mathrm{Se}=t)$ and
$N^{01}(F,\mathbf{c},\mathbf{r};\mathrm{Ne}=s,\mathrm{sE}=t)$ in the same
way.

Krattenthaler \cite[Theorem 13]{Krattenthaler2006} proved the following:
\begin{equation}
  \label{eq:krat}
N^{01}(F,\mathbf{c},\mathbf{r};\mathrm{nE}=s,\mathrm{Se}=t)
=N^{01}(F,\mathbf{c},\mathbf{r};\mathrm{Ne}=t,\mathrm{sE}=s).
\end{equation}
We note that \eqref{eq:krat} is slightly stronger
than the original one. However,
it is easy to see that Krattenthaler's argument proves \eqref{eq:krat}.

We can identify $\pi\in\Pi_n$ with a 0-1 filling of the Ferrers diagram $(n-1,n-2,\ldots,1)$ as follows.  For each front edge $(i,j)$ of $\pi$, we fill the cell in the $i$th column and in the $(n-j)$th row with $1$. We fill the remaining cells with $0$.  For example, see Fig.~\ref{fig:ferres2}.

\begin{figure}
  \centering
\psset{unit=0.5cm}
\begin{pspicture}(0,9) (9,0) \cell(1,1)[] \cell(1,2)[] \cell(1,3)[]
  \cell(1,4)[] \cell(1,5)[] \cell(1,6)[] \cell(1,7)[X] \cell(1,8)[]
  \cell(2,1)[] \cell(2,2)[] \cell(2,3)[] \cell(2,4)[] \cell(2,5)[]
  \cell(2,6)[] \cell(2,7)[X] \cell(3,1)[] \cell(3,2)[] \cell(3,3)[]
  \cell(3,4)[] \cell(3,5)[] \cell(3,6)[] \cell(4,1)[X] \cell(4,2)[]
  \cell(4,3)[] \cell(4,4)[] \cell(4,5)[] \cell(5,1)[] \cell(5,2)[X]
  \cell(5,3)[] \cell(5,4)[] \cell(6,1)[X] \cell(6,2)[] \cell(6,3)[]
  \cell(7,1)[] \cell(7,2)[] \cell(8,1)[] 
 \rput(0.5,-0.5){$1$} \rput(1.5,-0.5){$2$}
  \rput(2.5,-0.5){$3$} \rput(3.5,-0.5){$4$} \rput(4.5,-0.5){$5$}
  \rput(5.5,-0.5){$6$} \rput(6.5,-0.5){$7$} \rput(7.5,-0.5){$8$}
 \rput(-0.5,0.5){$9$} \rput(-0.5,1.5){$8$}
  \rput(-0.5,2.5){$7$} \rput(-0.5,3.5){$6$} \rput(-0.5,4.5){$5$}
  \rput(-0.5,5.5){$4$} \rput(-0.5,6.5){$3$} \rput(-0.5,7.5){$2$}
\end{pspicture}
\caption{The 0-1 filling of the Ferrers diagram $(8,7,\ldots,1)$ corresponding to the front
  representation of $(\bk{1,4,6},\bk{2,5},\bk{3},\bk{7,8,9})$. For better visibility, we write $X$'s instead of $1$'s and omit $0$'s.}
  \label{fig:ferres2}
\end{figure}
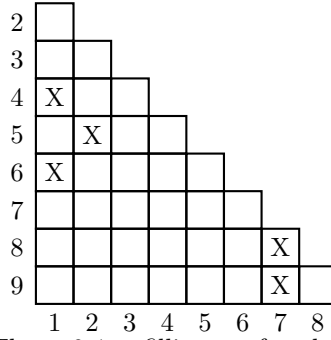

It is easy to check that a 0-1 filling can be obtained in this way if
and only if each row has at most one 1, and if the $i$th column has a
1, then the $(n+1-i)$th row does not have a 1.
In other words, if $c_i$ and $r_i$ are the number of 1's in
the $i$th column and the $i$th row respectively, then for all $i$,
\begin{equation}\label{eq:condition}
r_i\in\{0,1\},\qquad c_{i}\cdot r_{n+1-i}=0.
\end{equation}

In this identification, a front $r$-crossing (resp.~front $r$-nesting)
corresponds to an se-chain (resp.~ne-chain) and a weak front
$r$-crossing (resp.~weak front $r$-nesting) corresponds to an Se-chain
(resp.~Ne-chain). Observe that if each row has at most one $1$, then ne- (resp.~ Ne-,
se-, Se-) chains are equivalent to nE- (resp.~ NE-, sE-, SE-) chains.

Thus the number of $\pi\in\Pi_n$ with $\FCR(\pi)=s$ and $\WFNE(\pi)=t$
is equal to
\begin{align*}
\sum_{\mathbf{c},\mathbf{r}} N^{01}(F,\mathbf{c},\mathbf{r};\mathrm{ne}=s,\mathrm{Se}=t)
&= \sum_{\mathbf{c},\mathbf{r}} N^{01}(F,\mathbf{c},\mathbf{r};\mathrm{nE}=s,\mathrm{Se}=t)\\
&= \sum_{\mathbf{c},\mathbf{r}} N^{01}(F,\mathbf{c},\mathbf{r};\mathrm{Ne}=t,\mathrm{sE}=s)\\
&= \sum_{\mathbf{c},\mathbf{r}} N^{01}(F,\mathbf{c},\mathbf{r};\mathrm{Ne}=t,\mathrm{se}=s),
\end{align*}
where $F$ is the Ferrers diagram of $(n-1,n-2,\ldots,1)$, and the sum is over
all the sequences $\mathbf{c}=(c_1,c_2,\ldots,c_{n-1})$ and
$\mathbf{r}=(r_1,r_2,\ldots,r_{n-1})$
satisfying \eqref{eq:condition}. Since this number is equal to the
number of $\pi\in\Pi_n$ with $\WFCR(\pi)=t$ and $\FNE(\pi)=s$, we get
the following analog of \eqref{eq:chen}.

\begin{thm}\label{thm:symmetric}
We have
\[\sum_{\pi\in\Pi_n} x^{\FCR(\pi)} y^{\WFNE(\pi)}=
\sum_{\pi\in\Pi_n} x^{\FNE(\pi)} y^{\WFCR(\pi)}.\]
\end{thm}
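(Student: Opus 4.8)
The plan is to reduce Theorem~\ref{thm:symmetric} to Krattenthaler's identity \eqref{eq:krat} exactly as in the paragraph preceding the statement, so the proof is essentially a bookkeeping argument through the identification of partitions with 0-1 fillings of the staircase Ferres diagram $F=(n-1,n-2,\ldots,1)$. First I would recall this identification: a front edge $(i,j)$ of $\pi$ becomes a $1$ in column $i$, row $n-j$, and the image fillings are exactly those satisfying \eqref{eq:condition}, namely each row has at most one $1$ and column $i$ carrying a $1$ forbids a $1$ in row $n+1-i$. The key translation to verify is that a front $r$-crossing corresponds to an se-chain and a front $r$-nesting to an ne-chain, while the weak versions correspond to Se- and Ne-chains; this is a direct check from the definitions of the chain types (strict versus weak east/north) against the definitions of (weak) front $r$-crossings and $r$-nestings, using that column index is increasing along $i_1<i_2<\cdots$ and the row index $n-j_t$ orders the $j_t$'s in reverse.

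Next I would invoke the observation already stated in the excerpt: since each row of the filling has at most one $1$, the chain conditions ne, Ne, se, Se coincide with nE, NE, sE, SE respectively (lowercase/uppercase east makes no difference when rows are thin). This lets me pass freely between the lowercase-$e$ statistics appearing in the combinatorial objects and the uppercase-$E$ statistics appearing in Krattenthaler's theorem. Then I would write the count of $\pi\in\Pi_n$ with $\FCR(\pi)=s$ and $\WFNE(\pi)=t$ as
$$\sum_{\mathbf{c},\mathbf{r}} N^{01}(F,\mathbf{c},\mathbf{r};\mathrm{ne}=s,\mathrm{Se}=t),$$
where the sum ranges over all $\mathbf{c}=(c_1,\ldots,c_{n-1})$ and $\mathbf{r}=(r_1,\ldots,r_{n-1})$ satisfying \eqref{eq:condition}. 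Converting ne to nE (thin rows), applying \eqref{eq:krat} to get $N^{01}(F,\mathbf{c},\mathbf{r};\mathrm{Ne}=t,\mathrm{sE}=s)$, and converting sE back to se, this equals $\sum_{\mathbf{c},\mathbf{r}} N^{01}(F,\mathbf{c},\mathbf{r};\mathrm{Ne}=t,\mathrm{se}=s)$, which by the same dictionary counts $\pi\in\Pi_n$ with $\WFCR(\pi)=t$ and $\FNE(\pi)=s$. Summing the resulting equality $\#\{\pi:\FCR=s,\WFNE=t\}=\#\{\pi:\FNE=s,\WFCR=t\}$ against $x^sy^t$ over all $s,t\ge0$ yields the desired generating-function identity.

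I do not expect a serious obstacle here; the only point requiring care is that \eqref{eq:krat} must be applied with the row and column sum vectors $\mathbf{c},\mathbf{r}$ held fixed, and the set of admissible $(\mathbf{c},\mathbf{r})$ cut out by \eqref{eq:condition} is symmetric under the bijection we use — indeed \eqref{eq:krat} preserves $\mathbf{c}$ and $\mathbf{r}$, so no reindexing of the outer sum is needed and admissibility is automatically preserved. The mildly delicate step is therefore just confirming the chain dictionary in both the strict and weak cases, i.e.~that the "at most one $1$ per row" constraint forces the lowercase/uppercase equivalence and that weak front $r$-crossings genuinely correspond to Se-chains rather than, say, SE-chains in a way that would break the correspondence; this is handled by the sentence "Observe that if each row has at most one $1$\dots" already in the text. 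With those checks in place the theorem follows immediately from \eqref{eq:krat}.
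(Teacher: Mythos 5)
Your proposal is correct and follows essentially the same route as the paper: the proof of Theorem~\ref{thm:symmetric} is exactly the preceding paragraph's reduction to Krattenthaler's identity \eqref{eq:krat} via the 0-1 filling of the staircase Ferres diagram, the chain dictionary for (weak) front $r$-crossings and $r$-nestings, and the strict/weak conversion licensed by the ``at most one $1$ per row'' constraint \eqref{eq:condition}. You also correctly flag the only delicate points (the fixed $\mathbf{c},\mathbf{r}$ in the outer sum and the lowercase/uppercase chain equivalence), so nothing further is needed.
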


\begin{cor}\label{cor:sym}
  Let $r\geq2$. 
  \begin{enumerate}
  \item The number of partitions of $[n]$
    without front $r$-crossings is equal to the number of partitions of
  $[n]$ without front $r$-nestings.
  \item The number of partitions of $[n]$ without weak front
    $r$-crossings is equal to the number of partitions of $[n]$ without
    weak front $r$-nestings.
  \end{enumerate}
\end{cor}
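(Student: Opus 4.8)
The plan is to deduce both statements directly from the bivariate identity in Theorem~\ref{thm:symmetric} by suitable specializations. First I would record the elementary translation between ``avoidance'' and a bound on the relevant statistic: a partition $\pi\in\Pi_n$ has no front $r$-crossing precisely when $\FCR(\pi)\le r-1$, and no front $r$-nesting precisely when $\FNE(\pi)\le r-1$; likewise $\pi$ has no weak front $r$-crossing precisely when $\WFCR(\pi)\le r-1$, and no weak front $r$-nesting precisely when $\WFNE(\pi)\le r-1$. This is immediate from the definitions of these four statistics as the maximal lengths of the respective chains of edges. Hence each part of the corollary is just a comparison of the number of partitions on which one of these statistics does not exceed $r-1$.

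For part (1), set $y=1$ in Theorem~\ref{thm:symmetric}: the identity collapses to $\sum_{\pi\in\Pi_n} x^{\FCR(\pi)} = \sum_{\pi\in\Pi_n} x^{\FNE(\pi)}$, so $\FCR$ and $\FNE$ are equidistributed over $\Pi_n$. Summing the coefficients of $x^0, x^1, \ldots, x^{r-1}$ on both sides shows that the number of $\pi\in\Pi_n$ with $\FCR(\pi)\le r-1$ equals the number with $\FNE(\pi)\le r-1$; by the translation above this is exactly statement (1). For part (2), set $x=1$ in Theorem~\ref{thm:symmetric} instead, obtaining $\sum_{\pi\in\Pi_n} y^{\WFNE(\pi)} = \sum_{\pi\in\Pi_n} y^{\WFCR(\pi)}$, so $\WFCR$ and $\WFNE$ are equidistributed; the same coefficient-summing argument, now for $y^0,\ldots,y^{r-1}$, yields statement (2).

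There is no genuine obstacle here: all the substance is already contained in Theorem~\ref{thm:symmetric}, and the remaining work is the routine bookkeeping of the previous paragraph. The only point I would be careful to state explicitly is the equivalence between ``$\pi$ avoids a front $r$-crossing'' and ``$\FCR(\pi)<r$'' (and its three analogues), since that is what lets one read the univariate equidistributions off the specialized identity; here the hypothesis $r\ge 2$ simply ensures that these statistics and the notion of avoidance are being compared in the range where they were defined.
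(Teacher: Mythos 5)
Your proof is correct and matches the paper's (implicit) argument: the corollary is stated as an immediate consequence of Theorem~\ref{thm:symmetric}, obtained exactly by the specializations $y=1$ (for part (1)) and $x=1$ (for part (2)) together with the observation that avoiding a front $r$-crossing (resp.\ nesting, weak versions) is the same as the corresponding statistic being at most $r-1$. Nothing further is needed.
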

The first item in Corollary~\ref{cor:sym} is equivalent to this: the
number of $12\cdots r 12\cdots r$-avoiding partitions of $[n]$ is
equal to the number of $12\cdots r r\cdots 21$-avoiding
partitions of $[n]$, which is a special case of Corollary~20
in \cite{Jelinek2008}.

We propose the following conjecture, which has been checked up to $n=11$.
\begin{conj}\label{conj:sym}
We have
\[\sum_{\pi\in\Pi_n} x^{\FCR(\pi)} y^{\FNE(\pi)}=
\sum_{\pi\in\Pi_n} x^{\FNE(\pi)} y^{\FCR(\pi)}.\]
\end{conj}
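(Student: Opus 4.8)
The plan is to carry the conjecture, via the Ferres-diagram encoding already introduced in this section, into the language of 0-1 fillings and then attack it with growth-diagram/RSK machinery. Recall that $\pi\in\Pi_n$ corresponds to a 0-1 filling of the staircase $F=(n-1,n-2,\ldots,1)$ subject to \eqref{eq:condition}, that front $r$-crossings are se-chains and front $r$-nestings are ne-chains, and hence that $\FCR(\pi)$ and $\FNE(\pi)$ are the lengths of the longest se-chain and the longest ne-chain of the filling. Thus Conjecture~\ref{conj:sym} is equivalent to the statement that the joint distribution of (longest ne-chain, longest se-chain) over 0-1 fillings of $F$ satisfying \eqref{eq:condition} is symmetric, and I would aim to exhibit an explicit bijection on this set of fillings exchanging the two statistics.

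The first thing I would try is to mimic the derivation of Theorem~\ref{thm:symmetric}, but demanding the ``pure'' symmetry $(\mathrm{ne},\mathrm{se})\leftrightarrow(\mathrm{se},\mathrm{ne})$ rather than the ``mixed'' one $(\mathrm{ne},\mathrm{Se})\leftrightarrow(\mathrm{Ne},\mathrm{se})$ supplied by \eqref{eq:krat}. The natural candidate is a bijection on fillings of $F$ that fixes the column-sum sequence $\mathbf c$ and swaps the longest ne- and se-chains (on fillings with $r_i\le1$ the strict/weak distinctions collapse, as noted in the text, so only ne and se matter). One would run the growth-diagram local rules of Krattenthaler \cite{Krattenthaler2006} on $F$, check that the resulting involution stays inside the class \eqref{eq:condition}, and read off the statistic swap from the boundary shapes.

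The main obstacle — and presumably the reason this has remained a conjecture — is precisely the extra constraint $c_i\cdot r_{n+1-i}=0$, which couples column $i$ with row $n+1-i$. A bijection that fixes only $\mathbf c$ will in general redistribute the row sums, so there is no reason for it to preserve this coupling or even to keep $r_i\le1$; on the other hand, an involution that fixes both $\mathbf c$ and $\mathbf r$ (and would therefore automatically respect \eqref{eq:condition}, which is a condition on $\mathbf c$ and $\mathbf r$ alone) is, in Krattenthaler's setting, available only for the mixed statistics, which is exactly what yields Theorem~\ref{thm:symmetric} but not Conjecture~\ref{conj:sym}. A more ambitious alternative is to construct a Chen et al.~\cite{Chen2007}-style bijection between $\Pi_n$, in its front representation, and a family of sequences of Young diagrams — an analogue of vacillating tableaux — under which $\FCR(\pi)$ and $\FNE(\pi)$ become the maximal number of rows, respectively columns, among the diagrams that occur; conjugating every diagram in the sequence would then give the desired involution. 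Here the delicate point is that a block of the front representation is a star rather than a path, so the insertion step processing a head has to absorb all of its front edges at once; making this step both a bijection onto a set of tableau sequences closed under conjugation and one through which the longest ne/se-chain genuinely localizes as a row/column count is, I expect, the crux of the whole argument. Short of a clean version of either route, one could at least confirm the conjecture in the regime of small $\FCR+\FNE$ (e.g.\ front $3$-noncrossing partitions, already counted by $f_3$) or look for a sign-reversing involution proving the polynomial identity directly.
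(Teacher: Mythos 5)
There is a fundamental issue here that is not your fault but must be stated plainly: the statement you were asked to prove is Conjecture~\ref{conj:sym}, which the paper itself does not prove --- it is explicitly left open, verified only by computation up to $n=11$. Accordingly, there is no ``paper's own proof'' to compare against, and your submission, read honestly, is not a proof either: it is a survey of two plausible attack routes, each of which you yourself identify as blocked. A proposal that ends with ``short of a clean version of either route, one could at least confirm the conjecture in small cases'' has not closed the argument, and no amount of correct framing substitutes for the missing bijection or identity.

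That said, your framing is accurate and your diagnosis of the obstructions is essentially the right one. The reduction to 0-1 fillings of the staircase under \eqref{eq:condition}, with $\FCR$ and $\FNE$ becoming longest se- and ne-chains, is exactly the setting of Theorem~\ref{thm:symmetric}; and you correctly observe that Krattenthaler's result \eqref{eq:krat} only delivers the mixed symmetry $(\mathrm{nE},\mathrm{Se})\leftrightarrow(\mathrm{Ne},\mathrm{sE})$ while fixing both $\mathbf{c}$ and $\mathbf{r}$, whereas the pure symmetry $(\mathrm{ne},\mathrm{se})\leftrightarrow(\mathrm{se},\mathrm{ne})$ is in general \emph{false} for fillings with prescribed row and column sums --- so any proof must exploit the specific class \eqref{eq:condition} rather than a general filling theorem. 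Likewise, your remark that a Chen--Deng--Du--Stanley--Yan style vacillating-tableau argument breaks because blocks in the front representation are stars rather than paths (so a head must emit several edges at once) pinpoints why the standard-representation proof of \eqref{eq:chen} does not transfer. These are useful observations about why the problem is hard, but neither route is carried out, so the gap is the entire proof.
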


However, it is not the case when we use $\WFCR$ and $\WFNE$.
We have the following:
\begin{align*}
\#\{\pi\in \Pi_8 : \WFCR(\pi)=4, \WFNE(\pi)=2\} &= 73,\\
\#\{\pi\in \Pi_8 : \WFNE(\pi)=2, \WFCR(\pi)=4\} &= 74.
\end{align*}

\section*{Acknowledgement}
The author would like to thank Anisse Kasraoui for the information of Chen et al.'s work \cite{Chen2008}.

\end{document}